%2multibyte Version: 5.50.0.2960 CodePage: 65001
%% This document created by Scientific Word (R) Version 3.5

\documentclass[11pt]{amsart}%
\usepackage{graphicx}
\usepackage{amscd}
\usepackage{amsmath}
\usepackage{amsfonts}
\usepackage{amssymb}%
\setcounter{MaxMatrixCols}{30}
%TCIDATA{OutputFilter=latex2.dll}
%TCIDATA{Version=5.50.0.2960}
%TCIDATA{Codepage=65001}
%TCIDATA{CSTFile=amsartci.cst}
%TCIDATA{Created=Wednesday, November 21, 2001 07:29:43}
%TCIDATA{LastRevised=Monday, January 27, 2020 13:02:29}
%TCIDATA{<META NAME="GraphicsSave" CONTENT="32">}
%TCIDATA{<META NAME="SaveForMode" CONTENT="1">}
%TCIDATA{BibliographyScheme=Manual}
%TCIDATA{<META NAME="DocumentShell" CONTENT="Articles\SW\amsnumb8">}
%TCIDATA{Language=American English}
%BeginMSIPreambleData
\providecommand{\U}[1]{\protect\rule{.1in}{.1in}}
%EndMSIPreambleData
\newtheorem{theorem}{Theorem}
\theoremstyle{plain}

\newtheorem{lemma}{Lemma}[section]

\newtheorem{remark}{Remark}[section]

\numberwithin{equation}{section}
\numberwithin{theorem}{section}

\setlength{\oddsidemargin}{0.0in}
\setlength{\evensidemargin}{0.0in}
\setlength{\textwidth}{6.5in}
\setlength{\textheight}{8.5in}
\setlength{\headsep}{0.25in}
\setlength{\headheight}{0.0in}
\begin{document}
\title[Spectral Analysis of the Sixth-order Krall Differential Expression]{A New Spectral Analysis of the Sixth-order Krall Differential Expression}
\author{K. Elliott}
\address{Department of Mathematics, University of Wisconsin-Eau Claire, Hibbard
Humanities Hall 508, Eau Claire, WI 54701. }
\email{elliotkl@uwec.edu}
\author{L. L. Littlejohn}
\address{Department of Mathematics, Baylor University, One Bear Place \#97328, Waco, TX 76798-7328.}
\email{lance\_littlejohn@baylor.edu}
\author{R. Wellman}
\address{Department of Mathematics and Computer Science, Tut Science 208, 14 E. Cache
La Poudre St., Colorado College, Colorado Springs, CO 80903.}
\email{rwellman@coloradocollege.edu}
\date{January 22, 2020; [Elliott-Littlejohn-WellmanAIOT.tex]; submitted to AIOT on 1/22/2010}
\dedicatory{We are pleased to dedicate this paper to Dr. Franciszek Hugon Szafraniec on
occasion of his 80th birthday for his significant contributions to
mathematics. Franek, a scholar of eminence, is a long-time colleague and
friend of one of the authors (LLL). }\subjclass{Primary 33C65, 34B30, 47B25; Secondary 34B20, 47B65 }
\keywords{Krall polynomials, orthogonal polynomials, Glazman-Krein-Naimark theory,
symplectic geometry, Lagrangian symmetry, self-adjoint operator, boundary conditions}

\begin{abstract}
In this paper, we construct a self-adjoint operator $\widehat{T}_{A,B}$
generated by the sixth-order Krall differential expression in the extended
Hilbert space $L^{2}(-1,1)\oplus\mathbb{C}^{2}.$ To obtain $\widehat{T}%
_{A,B},$ we apply a new general theory, the so-called GKN-EM theory, developed
recently by Littlejohn and Wellman that extends the classical
Glazman-Krein-Naimark theory using a complex symplectic geometric approach
developed by Everitt and Markus. This work extends earlier studies of the
Krall expression by both\ Littlejohn and Loveland.

\end{abstract}
\maketitle

\section{Introduction}

In 1981, Littlejohn (see \cite{Littlejohn-thesis} and \cite{Littlejohn-QM})
discovered the sixth-order Krall differential expression%
\begin{align}
\ell_{K}[y](x):=  &  (x^{2}-1)^{3}y^{(6)}+18x(x^{2}-1)^{2}y^{(5)}\nonumber\\
&  +(x^{2}-1)\left(  (3A+3B+96)x^{2}-3A-3B-36\right)  y^{(4)}%
+(24A+24B+168)x(x^{2}-1)y^{\prime\prime\prime}\nonumber\\
&  +\left(  (12AB+42A+42B+72)x^{2}+(12B-12A)x-12AB-30A-30B-72\right)
y^{\prime\prime}\label{I-1}\\
&  +\left(  (24AB+12A+12B)x+12B-12A\right)  y^{\prime};\nonumber
\end{align}
here $A,B$ are fixed, positive constants and $x\in(-1,1)$. The expression
(\ref{I-1}) is Lagrangian symmetric; in fact,
\begin{align}
\ell_{K}[y](x)=  &  -\left(  (1-x^{2})^{3}y^{\prime\prime\prime}\right)
^{\prime\prime\prime}+\left(  (1-x^{2})\left(  12+(3A+3B+6)(1-x^{2})\right)
y^{\prime\prime}\right)  ^{\prime\prime}\label{I-2}\\
&  -\left(  \left(  (6A-6B-12AB)x^{2}+(12A-12B)x+12AB+18A+18B+24\right)
y^{\prime}\right)  ^{\prime}.\nonumber
\end{align}
If $n\in\mathbb{N}_{0}=\{0,1,2,\ldots\}$ and
\begin{equation}
\lambda_{n}:=n(n-1)(n^{4}+2n^{3}+(3A+3B-1)n^{2}+(3A+3B-2)n+12AB), \label{I-3}%
\end{equation}
the eigenvalue equation%
\begin{equation}
\ell_{K}[y](x)=\lambda_{n}y(x)\quad(n\in\mathbb{N}_{0}), \label{I-4}%
\end{equation}
has a polynomial solution of degree exactly $n,$ given explicitly by
\begin{equation}
y(x)=K_{n,A,B}(x):=\sum_{j=0}^{n}\frac{(-1)^{[\frac{j}{2}]}%
(2n-j)!Q(n,j)x^{n-j}}{2^{n+1}\left(  n-\left[  (j+1)/2\right]  \right)
!\left[  j/2\right]  !(n-j)!(n^{2}+n+A+B)}, \label{I-5}%
\end{equation}
where $[\cdot]$ denotes the greatest integer function and
\begin{align*}
Q(n,j):=  &  \frac{2+(-1)^{j}}{2}\left(  (n^{4}+(2A+2B-1)n^{2}+4AB+2j(n^{2}%
+n+A+B\right) \\
&  +\frac{1-(-1)^{j}}{2}(4B-4A).
\end{align*}
The set $\{K_{n,A,B}\}_{n=0}^{\infty}$ is called the Krall polynomials, named
after H. L. Krall (1907-1994).

For perspective, Krall \cite{Krall 1938} discovered a fourth-order
differential expression in 1938 which was subsequently named the Legendre type
expression and studied by his son A. M. Krall (1936-2008) in \cite{Krall-1981}%
. The Legendre type expression is given explicitly by
\begin{align}
\ell_{L}[y](x)  &  :=\left(  (1-x^{2})^{2}y^{\prime\prime}\right)
^{\prime\prime}-\left(  \left(  8+4A(1-x^{2})\right)  y^{\prime}\right)
^{\prime}\label{I-5.5}\\
&  =(1-x^{2})^{2}y^{(4)}+8x(x^{2}-1)y^{\prime\prime\prime}+(4A+12)(x^{2}%
-1)y^{\prime\prime}+8Axy^{\prime};\nonumber
\end{align}
in this case, $A$ is a fixed, positive parameter and $x\in(-1,1).$ The
eigenvalue equation%
\[
\ell_{L}[y](x)=\mu_{n}y(x)\quad(n\in\mathbb{N}_{0}),
\]
where
\[
\mu_{n}=n(n+1)(n^{2}+n+4A-2),
\]
has a polynomial solution%
\[
y(x)=P_{n,A}(x):=\sum_{j=0}^{[n/2]}\frac{(-1)^{j}(2n-2j)!\left(
A+n(n-1)/2+2j\right)  }{2^{n}j!(n-j)!(n-2j)!}x^{n-2j}%
\]
called the $n^{th}$ Legendre type polynomial. In this case, the sequence
$\{P_{n,A}\}_{n=0}^{\infty}$ forms a complete orthogonal set in the Hilbert
space $L_{\mu}^{2}[-1,1]$ with inner product%
\begin{equation}
(f,g)_{A}:=\frac{f(-1)\overline{g}(-1)}{A}+\int_{-1}^{1}f(x)\overline
{g}(x)dx+\frac{f(1)\overline{g}(1)}{A}. \label{I-6}%
\end{equation}
Here $d\mu$ is the positive Borel measure generated from the monotonically
non-decreasing function%
\[
\widehat{\mu}(x):=\left\{
\begin{array}
[c]{ll}%
-1-\frac{1}{A} & \text{if }x\leq-1\\
x & \text{if }-1<x<1\\
1+\frac{1}{A} & \text{if }x\geq1.
\end{array}
\right.
\]
Notice that, in (\ref{I-6}), there are equal `jumps' at the endpoints
$x=\pm1.$ In 1979, H. L. Krall asked the questions: if there are
\textit{unequal} jumps at $x=\pm1,$ what are the underlying orthogonal
polynomials and is there an accompanying differential equation that has these
polynomials as eigenfunctions? The answers are the Krall polynomials
(\ref{I-5}) and the sixth-order Krall equation (\ref{I-4}). The Krall
polynomials $\{K_{n,A,B}\}_{n=0}^{\infty}$ form a complete orthogonal set in
the Hilbert space $L_{\kappa}^{2}[-1,1]$ with inner product%
\begin{equation}
(f,g)_{\kappa}=\int_{[-1,1]}f\overline{g}d\kappa:=\frac{f(-1)\overline{g}%
(-1)}{A}+\int_{-1}^{1}f(x)\overline{g}(x)dx+\frac{f(1)\overline{g}(1)}{B}.
\label{I-7}%
\end{equation}
where $\kappa$ is the positive Borel measure generated from the distribution
function%
\[
\widehat{\kappa}(x)=\left\{
\begin{array}
[c]{ll}%
-1-\frac{1}{A} & \text{if }x\leq-1\\
x & \text{if }-1<x<1\\
1+\frac{1}{B} & \text{if }x\geq1.
\end{array}
\right.
\]

The first systematic spectral analysis of (\ref{I-1}) was accomplished by
Susan Loveland in her Ph.D. thesis \cite{Loveland} (see also
\cite{Everitt-Littlejohn-Loveland}) where Loveland constructed the
self-adjoint operator $T_{A,B}$ in $L_{\kappa}^{2}[-1,1],$ generated by
$\ell_{K}[\cdot],$ having the Krall polynomials as eigenfunctions. The
analysis was challenging; indeed, the classical Glazman-Krein-Naimark (GKN)
theory (see \cite[Chapter V]{Naimark}) is not immediately applicable so new
techniques and analytic tools had to be developed. It is noteworthy that the
Krall expression (\ref{I-1}) is in the limit-5 case at both endpoints $x=\pm1$
in $L^{2}(-1,1).$ Consequently, \underline{four} appropriate boundary
conditions (separated and/or non-separated) are needed to describe each
self-adjoint operator, generated by $\ell_{K}[\cdot]$, in $L^{2}(-1,1).$ In
this $L^{2}(-1,1)$ setting, the GKN theory can be used. However, we wish to
study $\ell_{K}[\cdot]$ in $L_{\kappa}^{2}[-1,1]$ (since this is where the
Krall polynomial eigenfunctions are orthogonal) and, in this setting, the GKN
theory fails to explicitly help. As it turns out, it is remarkable that only
\underline{two} boundary conditions are needed to describe the self-adjoint
operator $T_{A,B},$ generated by $\ell_{K}[\cdot],$ in $L_{\kappa}^{2}[-1,1]$
having the Krall polynomials $\{K_{n,A,B}\}_{n=0}^{\infty}$ as eigenfunctions.

The initial work of Everitt, Littlejohn and Loveland from 1990-1993 on the
spectral analysis of the Krall expression did not address the question of
finding \textit{all }self-adjoint operators $T$ in $L_{\kappa}^{2}[-1,1]$
generated by $\ell_{K}[\cdot].$ Quite simply, at that time, the authors did
not have the tools to answer this more general question. Now, however, due to
a recent generalization of the GKN theory recently developed by Littlejohn and
Wellman \cite{Littlejohn-Wellman2019}, we have the tools to find all
self-adjoint operators $\widehat{T}$ in $L_{\kappa}^{2}[-1,1]$ generated by
$\ell_{K}[\cdot].$ The key to this generalization is a series of important
papers by W. N. Everitt and L. Markus \cite{Everitt-Markus1999},
\cite{Everitt-MarkusBVP}, \cite{Everitt-MarkusComplexSymplectic} who saw
important connections between the von Neumann theory of self-adjoint
extensions of symmetric operators and the theory of complex symplectic geometry.

The contents of this paper are as follows. In Section \ref{Section Two}, we
review properties of functions in the maximal domain $\Delta_{K}$ of the Krall
expression $\ell_{K}[\cdot]$ in $L^{2}(-1,1)$. In Section \ref{Section Three},
we discuss a dense subset $\delta_{K}$ $\subset\Delta_{K}$ that is the domain
of the self-adjoint operator $T_{A,B}$ in $L_{\kappa}^{2}[-1,1]$ generated by
$\ell_{K}[\cdot],$ having the Krall polynomials $\{K_{n,A,B}\}_{n=0}^{\infty}$
as eigenfunctions. We also briefly describe the Everitt-Littlejohn-Loveland
method for constructing this self-adjoint operator. Section \ref{Section Four}
gives a synopsis of the GKN-EM theory developed by Littlejohn and Wellman
based on the work of Everitt and Markus. In this regard, it is important to
note that $L_{\kappa}^{2}[-1,1]$ is isometrically isomorphic to $L^{2}%
(-1,1)\oplus\mathbb{C}^{2}.$ Lastly, in Section \ref{Section Five}, we apply
the GKN-EM theory to construct $\widehat{T}_{A,B}$ which is equivalent to the
operator $T_{A,B}$ found by Loveland. As the reader will see, this approach is
algorithmic in the sense that the GKN-EM Theorem (see Theorem \ref{GKNEM}) may
be applied to find all self-adjoint operators $\widehat{T}$ in $L_{\kappa}%
^{2}[-1,1]$ generated by $\ell_{K}[\cdot].$

\section{Properties of Functions in the Maximal Domain $\Delta_{K}$ of
$\ell_{K}[\cdot]$ in $L^{2}(-1,1)$\label{Section Two}}

The maximal domain $\Delta_{\kappa}$ (see \cite[Chapter V]{Naimark}) for
$\ell_{K}[\cdot]$ in $H:=L^{2}(-1,1)$ is defined to be%
\begin{align}
\Delta_{K}:=\{f:(-1,1)\rightarrow\mathbb{C\mid}  &  f^{(j)}\in
AC_{\mathrm{loc}}(-1,1)\text{ }(j=0,1,2,3,4,5);\label{II-1}\\
&  f,\ell_{K}[f]\in L^{2}(-1,1)\}.\nonumber
\end{align}
This set $\Delta_{K}$ is the largest subspace of functions $f$ in
$L^{2}(-1,1)$ for which $\ell_{K}[f]$ $\in L^{2}(-1,1).$ In this setting,
Green's formula reads as%
\begin{align}
\lbrack f,g]_{H}  &  :=\int_{-1}^{1}\ell_{K}[f](x)\overline{g}(x)dx-\int%
_{-1}^{1}f(x)\ell_{K}[\overline{g}](x)dx\label{II-1.5}\\
&  =[f,g]_{K}(x)_{-1}^{1}:=[f,g]_{K}(1)-[f,g]_{K}(-1)\quad(f,g\in\Delta
_{K}),\nonumber
\end{align}
where $[\cdot,\cdot]_{K}$ is the symplectic (bilinear) form or bilinear
concomitant defined by%
\begin{align}
\lbrack f,g]_{K}(x)  &  :=\left(  -\left(  (1-x^{2})^{3}f^{(3)}(x)\right)
^{\prime\prime}+\left(  (1-x^{2})(12+\alpha(1-x^{2}))f^{\prime\prime
}(x)\right)  ^{\prime}-\pi(x)f^{\prime}(x)\right)  \overline{g}(x)\nonumber\\
&  -\left(  -\left(  (1-x^{2})^{3}\overline{g}^{(3)}(x)\right)  ^{\prime
\prime}+\left(  (1-x^{2})(12+\alpha(1-x^{2}))\overline{g}^{\prime\prime
}(x)\right)  ^{\prime}-\pi(x)\overline{g}^{\prime}(x)\right)  f(x)\nonumber\\
&  -\left(  -\left(  (1-x^{2})^{3}f^{(3)}(x)\right)  ^{\prime}+(1-x^{2}%
)(12+\alpha(1-x^{2}))f^{\prime\prime}(x)\right)  \overline{g}^{\prime
}(x)\label{II-2}\\
&  +\left(  -\left(  (1-x^{2})^{3}\overline{g}^{(3)}(x)\right)  ^{\prime
}+(1-x^{2})(12+\alpha(1-x^{2}))\overline{g}^{\prime\prime}(x)\right)
f^{\prime}(x)\nonumber\\
&  -(1-x^{2})^{3}(f^{(3)}(x)\overline{g}^{\prime\prime}(x)-f^{\prime\prime
}(x)\overline{g}^{(3)}(x)).\nonumber
\end{align}
Here, for simplicity, $\alpha:=3A+3B+6$ and
\[
\pi(x):=(-6A-6B-12AB)x^{2}+(12A-12B)x+(12AB+18A+18B+24).
\]
We emphasize to the reader the subtle difference between the symplectic form
$[\cdot,\cdot]_{H}$ and the bilinear concomitant $[\cdot,\cdot]_{K}.$

The associated maximal operator $T:\mathcal{D}(T)\subset L^{2}%
(-1,1)\rightarrow L^{2}(-1,1)$ is given by
\begin{align*}
T[f](x)  &  =\ell_{K}[f](x)\\
f\in\mathcal{D}(T):  &  =\Delta_{K}.
\end{align*}
It is a densely defined operator with adjoint $T^{\ast}=T_{0}:\mathcal{D}%
(T_{0})\subset L^{2}(-1,1)\rightarrow L^{2}(-1,1),$ called the minimal
operator. From the definition of $\Delta_{K},$ and H\"{o}lder's inequality, we
see that
\begin{equation}
\lim_{x\rightarrow\pm1}[f,g]_{K}(x) \label{Limit of [f,g] at 1 and -1}%
\end{equation}
exists and is finite for all $f,g\in\Delta_{K}.$ Moreover, $1\in\Delta_{K}$
and
\begin{equation}
\lbrack f,1]_{K}(x)=-\left(  (1-x^{2})^{3}f^{(3)}(x)\right)  ^{\prime\prime
}+\left(  (1-x^{2})(12+\alpha(1-x^{2}))f^{\prime\prime}(x)\right)  ^{\prime
}-\pi(x)f^{\prime}(x) \label{II-3}%
\end{equation}
for all $f\in\Delta_{K}.$

For $f\in\Delta_{K}$ and $x\in(-1,1),$ define $\Lambda:\Delta_{K}\rightarrow
L^{2}(-1,1)$ by%
\begin{align}
\Lambda\lbrack f](x)  &  :=\int_{0}^{x}\int_{0}^{t}\ell_{K}[f](s)dsdt-f^{(5)}%
(0)x-f^{(4)}(0)+(18+\alpha)f^{\prime\prime\prime}(0)x\label{II-4}\\
&  +(12+\alpha)f^{\prime\prime}(0)-\pi(0)f^{\prime}(0)x+\int_{0}^{x}%
\pi(t)f^{\prime}(t)dt\nonumber\\
&  =-\left(  (1-x^{2})^{3}f^{\prime\prime\prime}(x)\right)  ^{\prime}%
+(1-x^{2})(12+\alpha(1-x^{2}))f^{\prime\prime}(x).\nonumber
\end{align}
Note that $\Lambda\lbrack f]\in L^{2}(-1,1)$ and $\Lambda\lbrack f]\in
AC_{\mathrm{loc}}(-1,1)$ for all $f\in\Delta_{K}$. As a result of Theorem
\ref{Properties of Maximal Domain} below, we will see that
\[
\lim_{x\rightarrow\pm1}\Lambda\lbrack f](x)
\]
exists and is finite and so $\Lambda\lbrack f]\in AC[-1,1]$ for $f\in
\Delta_{K}$. As we will see in the next section, this linear operator
$\Lambda\lbrack\cdot]$ plays a key role in our analytic study of $\ell
_{K}[\cdot]$ in $L^{2}(-1,1).$

Loveland \cite[Theorem 7.2.2]{Loveland} (see also
\cite{Everitt-Littlejohn-Loveland}) established the following theorem that
lists properties of functions in $\Delta_{K}.$

\begin{theorem}
\label{Properties of Maximal Domain}Let $f,g\in\Delta_{K}.$ Then

\begin{enumerate}
\item[(i)] $f^{\prime}\in L^{2}(-1,1)$ so $f\in AC[-1,1];$ in particular,
$\Delta_{K}\subset L_{\kappa}^{2}[-1,1]$ and $\Lambda\lbrack f]\in
AC[-1,1];\medskip$

\item[(ii)] $\lim_{x\rightarrow\pm1}(1-x^{2})^{j}f^{(j)}(x)=0$ for
$j=1,2,3;\medskip$

\item[(iii)] $1\in\Delta_{K}$ and $\lim_{x\rightarrow\pm1}[f,1]_{K}%
(x)=\lim_{x\rightarrow\pm1}\left(  \Lambda^{\prime}[f](x)-\pi(x)f^{\prime
}(x)\right)  ;\medskip$

\item[(iv)] $1-x^{2}\in\Delta_{K}$ and
\begin{align*}
\lim_{x\rightarrow1}[f,1-x^{2}]_{K}(x)  &  =2\Lambda\lbrack
f](1)-48(A+2)f(1),\\
\lim_{x\rightarrow-1}[f,1-x^{2}]_{K}(x)  &  =-2\Lambda\lbrack
f](-1)+48(B+2)f(-1).
\end{align*}

\item[(v)] $(1-x^{2})^{2}\in\Delta_{K}$ and
\[
\lim_{x\rightarrow\pm1}[f,(1-x^{2})^{2}]_{K}(x)=\pm192f(\pm1);\medskip
\]

\item[(vi)] Define $h_{\pm}\in C^{6}[-1,1]$ by
\begin{align*}
h_{+}(x)  &  =%
\begin{cases}
0 & x\text{ near }-1\\
\frac{1}{8}(A+2)(1-x^{2})^{2}\ln(1-x^{2})+\frac{1}{2}(1-x^{2})\ln(1-x^{2}) &
x\text{ near }1,
\end{cases}
\\
h_{-}(x)  &  =%
\begin{cases}
\frac{1}{8}(B+2)(1-x^{2})^{2}\ln(1-x^{2})+\frac{1}{2}(1-x^{2})\ln(1-x^{2}) &
x\text{ near }-1\\
0 & x\text{ near }1.
\end{cases}
\end{align*}
Then $h_{\pm}\in\Delta_{K}$; moreover,
\begin{align*}
\lim_{x\rightarrow-1}[f,h_{-}]_{K}(x)  &  =(-32B-12A+16)f(-1)\\
&  +\lim_{x\rightarrow-1}\left(  -\Lambda\lbrack f](x)h_{-}^{\prime
}(x)+32f^{\prime}(x)-(1-x^{2})^{3}(f^{\prime\prime\prime}(x)h_{-}%
^{\prime\prime}(x)-h_{-}^{\prime\prime\prime}(x)f^{\prime\prime}(x))\right)  ,
\end{align*}
and
\begin{align*}
\lim_{x\rightarrow1}[f,h_{+}]_{K}(x)  &  =(32A+12B-16)f(1)\\
&  +\lim_{x\rightarrow1}\left(  -\Lambda\lbrack f](x)h_{+}^{\prime
}(x)+32f^{\prime}(x)-(1-x^{2})^{3}(f^{\prime\prime\prime}(x)h_{+}%
^{\prime\prime}(x)-h_{+}^{\prime\prime\prime}(x)f^{\prime\prime}(x))\right)  ;
\end{align*}

\item[(vii)] $\lim_{x\rightarrow\pm1}[f,(1-x^{2})^{3}]_{K}(x)=0;$ consequently
$(1-x^{2})^{3}$ is in the domain of minimal operator in $L^{2}(-1,1)$
associated with $\ell_{K}[\cdot];\medskip$

\item[(viii)] $\lim_{x\rightarrow\pm1}[f,g]_{K}(x)=[f,1]_{K}(\pm1)\overline
{g}(\pm1)-[\overline{g},1]_{K}(\pm1)f(\pm1)\medskip$\newline$+\lim
_{x\rightarrow\pm1^{\mp}}(-\Lambda\lbrack f](x)\overline{g}^{\prime
}(x)+\Lambda\lbrack\overline{g}](x)f^{\prime}(x)-(1-x^{2})^{3}(f^{(3)}%
(x)\overline{g}^{\prime\prime}(x)-f^{\prime\prime}(x)\overline{g}%
^{\prime\prime\prime}(x)).$ $\blacksquare$
\end{enumerate}
\end{theorem}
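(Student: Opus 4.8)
The plan is to regard parts (i) and (ii) as the analytic heart of the theorem---everything happens near the two \emph{regular singular} endpoints $x=\pm1$ of the Fuchsian expression $\ell_K$---and then to derive (iii)--(vii) from a single master identity, namely (viii). The one external fact I would use throughout is that $\lim_{x\to\pm1}[f,g]_K(x)$ exists and is finite for all $f,g\in\Delta_K$; this was established just before the theorem by integrating Green's formula (\ref{II-1.5}) from an interior point to an endpoint and invoking H\"older's inequality, and it is what keeps the boundary contributions below under control.

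For (i), I would work near $x=1$ (then $x=-1$ by symmetry). The engine is the endpoint asymptotics: $\ell_K$ has a regular singular point there, and by the limit-$5$ classification exactly one of the six local solutions of $\ell_K[y]=0$ fails to be $L^2$ near $1$. Writing $f\in\Delta_K$ by variation of parameters against a fundamental system and imposing $f\in L^2$ forces a zero coefficient on that non-$L^2$ solution; the resulting admissible asymptotics yield $f'\in L^2(-1,1)$, hence $f\in AC[-1,1]$ and $\Delta_K\subset L_\kappa^2[-1,1]$, and, since $f'\in L^1$, the term $\int_0^x\pi f'$ in (\ref{II-4}) is absolutely continuous so that $\Lambda[f]\in AC[-1,1]$. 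It is convenient to record, via the factorization $\ell_K[f]=\Lambda''[f]-(\pi f')'$ implicit in (\ref{II-4}), the Dirichlet-type identity
\[
\int \ell_K[f]\,\overline f\,dx=\int (1-x^2)^3|f'''|^2\,dx+\int (1-x^2)\bigl(12+\alpha(1-x^2)\bigr)|f''|^2\,dx+\int \pi|f'|^2\,dx+\mathcal B_\varepsilon,
\]
obtained by pairing with $\overline f$ and integrating by parts on $[-1+\varepsilon,1-\varepsilon]$; its first two weights are nonnegative on $[-1,1]$ and the last is positive at the endpoints ($\pi(1)=24(A+1)$, $\pi(-1)=24(B+1)$), so once the asymptotics bound $\mathcal B_\varepsilon$ this identity delivers the weighted bounds $\int(1-x^2)^3|f'''|^2<\infty$ and $\int(1-x^2)|f''|^2<\infty$ used below.

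For (ii), I would feed those weighted bounds back into the problem. A Cauchy--Schwarz estimate against them shows that the derivative of each $(1-x^2)^j f^{(j)}$ ($j=1,2,3$) is integrable near $\pm1$, so each weighted quantity is absolutely continuous up to the endpoint and therefore has a finite limit there; a nonzero limit would force $f^{(j)}$ to grow like $(1-x^2)^{-j}$, contradicting the corresponding weighted $L^2$ bound, so every limit must be $0$. The care needed to establish that these limits genuinely exist before asserting that they vanish is, together with the endpoint asymptotics in (i), the main obstacle of the whole theorem.

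The remaining parts come from regrouping the concomitant (\ref{II-2}): its first two lines are $[f,1]_K\,\overline g$ and $-[\overline g,1]_K\,f$ by (\ref{II-3}), and its next two are $-\Lambda[f]\,\overline g'+\Lambda[\overline g]\,f'$ by (\ref{II-4}), so
\[
[f,g]_K=[f,1]_K\,\overline g-[\overline g,1]_K\,f-\Lambda[f]\,\overline g'+\Lambda[\overline g]\,f'-(1-x^2)^3\bigl(f^{(3)}\overline g''-f''\overline g^{(3)}\bigr),
\]
and letting $x\to\pm1$ (with $f,g\in AC[-1,1]$ from (i)) is precisely (viii). Each of (iii)--(vii) is then this identity applied to a specific $g$: for the polynomial choices $1,\,1-x^2,\,(1-x^2)^2,\,(1-x^2)^3$, membership in $\Delta_K$ is immediate because $\ell_K$ maps polynomials to polynomials, and one simply substitutes and uses (ii) to annihilate the weighted terms. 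The one delicate verification is (vi), where I would first check that $h_\pm\in\Delta_K$, i.e. that $\ell_K[h_\pm]\in L^2$ near the endpoints: although $\ell_K$ applied to $\tfrac18(A+2)(1-x^2)^2\ln(1-x^2)$ and $\tfrac12(1-x^2)\ln(1-x^2)$ naively produces non-$L^2$ singularities, the coefficients are chosen precisely so that $h_\pm$ reproduces the borderline logarithmic endpoint solution of $\ell_K[y]=0$, whence those singular contributions cancel and $\ell_K[h_\pm]\in L^2$; the stated limit formulas then follow from the master identity as before.
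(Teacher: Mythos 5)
Your reduction of (iii)--(viii) is sound: regrouping the concomitant (\ref{II-2}) via (\ref{II-3}) and the second expression in (\ref{II-4}) gives exactly the master identity (viii), and specializing $g$ to $1$, $1-x^{2}$, $(1-x^{2})^{2}$, $(1-x^{2})^{3}$, $h_{\pm}$ (using (ii) to annihilate the weighted terms) is essentially how these formulas arise in the cited sources \cite{Loveland}, \cite{Everitt-Littlejohn-Loveland}; the present paper states the theorem without proof, so the comparison is against those works. The genuine gap is in your analytic core. The weighted bounds you extract from the Dirichlet identity, $\int(1-x^{2})^{3}|f'''|^{2}<\infty$ and $\int(1-x^{2})\bigl(12+\alpha(1-x^{2})\bigr)|f''|^{2}<\infty$, are \emph{false} on the maximal domain $\Delta_{K}$. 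The paper's own Frobenius solution $\widehat{\varphi}_{1}(x)=3\log|x-1|\sum_{n\geq0}c_{n}(x-1)^{n+1}+\cdots$ with $c_{0}\neq0$, cut off smoothly to vanish near $-1$, lies in $\Delta_{K}$, and near $x=1$ satisfies $\widehat{\varphi}_{1}''\sim 3c_{0}(x-1)^{-1}$ and $\widehat{\varphi}_{1}'''\sim -3c_{0}(x-1)^{-2}$, so \emph{both} weighted integrands behave like $(1-x)^{-1}$ and both integrals diverge (this is the same computation behind the paper's remark that $\widehat{\varphi}_{1}''\notin L^{2}(-1,1)$, i.e.\ that (i) is best possible). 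Correspondingly your boundary term $\mathcal{B}_{\varepsilon}$ does not stay bounded: for this $f$ it diverges logarithmically, e.g.\ through the contribution $\bigl((1-x^{2})(12+\alpha(1-x^{2}))f''\bigr)\overline{f}'$, so the caveat ``once the asymptotics bound $\mathcal{B}_{\varepsilon}$'' cannot be discharged. In effect you have imported the conclusion of Theorem \ref{Properties of little delta_k}(a): third-derivative estimates of this kind are precisely what the two extra conditions $\Lambda[f](\pm1)=0$ defining $\delta_{K}$ purchase, and they are not available on $\Delta_{K}$.

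This collapses part (ii) as you prove it --- the \emph{existence} of the limits $\lim_{x\to\pm1}(1-x^{2})^{j}f^{(j)}(x)$ rests on Cauchy--Schwarz against the false bounds --- and (ii) is needed downstream to kill the weighted terms in (iv)--(vii). The conclusions themselves are true; to repair the argument you must obtain the endpoint behavior pointwise from the variation-of-parameters representation you set up for (i): each of the five admissible solutions $\varphi_{3},\varphi_{2},\varphi_{1},\widehat{\varphi}_{1},\varphi_{0}$ satisfies (ii) by inspection, so the entire burden falls on the estimate of the particular integral, which your sketch leaves implicit but which is the hard analysis in Loveland's proof. Two subsidiary points: even granting your weighted bounds, the ``derivative is integrable'' device fails at $j=3$, since $\bigl((1-x^{2})^{3}f'''\bigr)'$ involves $f^{(4)}$, which neither bound controls --- the fix is the first-order identity $\bigl((1-x^{2})^{3}f'''\bigr)'=(1-x^{2})(12+\alpha(1-x^{2}))f''-\Lambda[f]$ from (\ref{II-4}), with $\Lambda[f]$ bounded by (i); and in (vi) your cancellation mechanism for $\ell_{K}[h_{\pm}]\in L^{2}$ is the right idea but is asserted rather than verified, since each log piece separately produces an $(1-x)^{-2}$-type singularity and only the stated coefficients $\tfrac18(A+2)$ and $\tfrac12$ make the singular parts cancel to an $L^{2}$ remainder.
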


\begin{remark}
The importance of the functions $h_{\pm}$ will be made clear in Section
\ref{Section Five} $($see Lemma \ref{GKN Set for T_0 hat}$)$.
\end{remark}

We turn our attention to a brief study of the Frobenius solutions to
\[
\ell_{K}[y](x)=0\quad(x\in(-1,1)).
\]
The endpoints $x=\pm1$ are regular singular endpoints of $\ell_{K}[\cdot]$ in
the sense of Frobenius. At either endpoint, the Frobenius indicial equation is
given by%
\[
\rho(r)=(r-3)(r-2)(r-1)^{2}r(r+1)=0.
\]
A careful analysis of the six linearly independent Frobenius solutions at
$x=1$ yields%
\begin{align*}
\varphi_{3}(x)  &  =\sum_{n=0}^{\infty}a_{n}(x-1)^{n+3}\quad(a_{0}\neq0)\\
\varphi_{2}(x)  &  =\sum_{m=0}^{\infty}b_{n}(x-1)^{n+2}+\log\left\vert
x-1\right\vert \sum_{n=1}^{\infty}\widetilde{b}_{n}(x-1)^{n+2}\quad(b_{0}%
\neq0)\\
\varphi_{1}(x)  &  =\sum_{n=0}^{\infty}c_{n}(x-1)^{n+1}\quad(c_{0}\neq0)\\
\widehat{\varphi}_{1}(x)  &  =3\log\left\vert x-1\right\vert \sum
_{n=0}^{\infty}c_{n}(x-1)^{n+1}+\sum_{n=0}^{\infty}d_{n}(x-1)^{n+1}\quad
(d_{0}\neq0)\\
\varphi_{0}(x)  &  =\sum_{n=0}^{\infty}e_{n}(x-1)^{n}+\log\left\vert
x-1\right\vert \sum_{n=1}^{\infty}f_{n}(x-1)^{n}\quad(e_{0}\neq0)\\
\varphi_{-1}(x)  &  =\log\left\vert x-1\right\vert \sum_{n=1}^{\infty}%
g_{n}(x-1)^{n-1}+\sum_{n=0}^{\infty}h_{n}(x-1)^{n-1}\quad(h_{0}\neq0).
\end{align*}
The subscripts in the above solutions correspond to the indicial root. Each of
these series converges for $\left\vert x-1\right\vert <2.$ Since $\varphi
_{3},\varphi_{2,}\varphi_{1,}\widehat{\varphi}_{1},\varphi_{0}\in L^{2}[0,1)$
but $\varphi_{-1}\notin L^{2}[0,1),$ we see that $\ell_{K}[\cdot]$ is in the
limit-5 case at $x=1$ in $L^{2}(-1,1).$ A similar Frobenius analysis shows
that $\ell_{K}[\cdot]$ is also in the limit-5 case at $x=-1$ in $L^{2}%
(-1,1).$Moreover, it is the case that $\widehat{\varphi}_{1}^{\prime\prime
}\notin L^{2}(-1,1)$ so the smoothness condition given in part (i) of Theorem
\ref{Properties of Maximal Domain} is best possible.

We seek to `extract' from $\Delta_{K}$ a dense (in $L^{2}(-1,1))$ subset that
is the domain of a self-adjoint operator, generated by $\ell_{K}[\cdot],$
having the Krall polynomials as eigenfunctions. To this end, we note that the
Krall polynomials are natural generalizations of both the classical Legendre
polynomials and the Legendre type polynomials. Moreover, the self-adjoint
operators $T_{L}$ and $T_{A}$ generated, respectively, by the classical
second-order Legendre differential expression%
\[
\ell_{L}[y](x)=-((1-x^{2})y^{\prime}(x))^{\prime}\quad(x\in(-1,1))
\]
and the fourth-order Legendre type expression (\ref{I-5.5}), having the
Legendre polynomials and Legendre type polynomials, respectively, as
eigenfunctions\ have the remarkable smoothness properties%
\begin{align*}
f  &  \in\mathcal{D}(T_{L})\Longrightarrow f^{\prime}\in L^{2}(-1,1)\\
f  &  \in\mathcal{D}(T_{A})\Longrightarrow f^{\prime\prime}\in L^{2}(-1,1).
\end{align*}
It is natural to ask: does the self-adjoint operator $T_{A,B}$ in $L_{\kappa
}^{2}[-1,1]$, generated by the Krall expression $\ell_{K}[\cdot]$ and having
the Krall polynomials $\{K_{n,A,B}\}_{n=0}^{\infty}$ as eigenfunctions, have a
similar property? More specifically, will it be the case that%
\[
f\in\mathcal{D}(T_{A,B})\Longrightarrow f^{\prime\prime\prime}\in
L^{2}(-1,1)?
\]
The answer is yes. In the next section, we further restrict the maximal domain
to a proper subset $\delta_{K}\subset\Delta_{K}\subset L_{\kappa}^{2}[-1,1]$
which turns out to be the domain of $T_{A,B}$, generated by $\ell_{K}[\cdot],$
having the Krall polynomials as eigenfunctions.

\section{Properties of Functions in a Proper Subspace $\delta_{K}$ of
$\Delta_{K}$ of $\ell_{K}[\cdot]$ in $L^{2}(-1,1)$\label{Section Three}}

For specific details of results in this section, we refer the reader to
\cite{Everitt-Littlejohn-Loveland} and to the Loveland thesis \cite{Loveland}.

A key idea in these publications to finding $\delta_{K}$ was to restrict the
maximal domain $\Delta_{K}$ so that those Frobenius solutions $\varphi$ of
$\ell_{K}[y]=0$ near $x=\pm1$ satisfying $\varphi^{\prime\prime\prime}\notin
L^{2}(-1,1)$ were eliminated. By taking an appropriate linear combination of
the functions $(1-x^{2})$ and $(1-x^{2})^{2}$ and appealing to Theorem
\ref{Properties of Maximal Domain} (iv) and (v), we were led to defining
$\psi_{\pm}\in C^{6}[-1,1]\cap\Delta_{K}$ by%
\begin{equation}
\psi_{+}(x)=\left\{
\begin{array}
[c]{ll}%
\frac{1}{2}(1-x^{2})+\frac{1}{8}(A+2)(1-x^{2})^{2} & x\text{ near }1\\
0 & x\text{ near }-1
\end{array}
\right.  \label{psi+}%
\end{equation}
and%
\begin{equation}
\psi_{-}(x)=\left\{
\begin{array}
[c]{ll}%
0 & x\text{ near }1\\
-\frac{1}{2}(1-x^{2})-\frac{1}{8}(B+2)(1-x^{2})^{2} & x\text{ near }-1.
\end{array}
\right.  \label{psi-}%
\end{equation}
It is straightforward to see that

\begin{theorem}
\label{Boundary Conditions and Lambda}Let $f\in\Delta_{K}.$ Then%
\begin{equation}
\lbrack f,\psi_{\pm}]_{K}(\pm1)=\Lambda\lbrack f](\pm1).\blacksquare
\label{GKN BC's}%
\end{equation}

\end{theorem}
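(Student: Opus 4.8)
The plan is to exploit the fact that, in a neighborhood of each endpoint, $\psi_\pm$ is a real linear combination of the two functions $1-x^2$ and $(1-x^2)^2$, whose concomitants against an arbitrary $f\in\Delta_K$ were already computed in parts (iv) and (v) of Theorem \ref{Properties of Maximal Domain}. The first step is to note that $[f,g]_K(x)$ is conjugate-linear in its second argument (only $\overline g$ and its derivatives occur in that slot), so $g\mapsto[f,g]_K$ is additive and pulls complex scalars out with a conjugate. Since the building blocks $1-x^2,(1-x^2)^2$ and the coefficients $\tfrac12,\tfrac18(A+2),\tfrac18(B+2)$ appearing in (\ref{psi+})--(\ref{psi-}) are all real, conjugation is harmless and $[f,\cdot]_K$ acts on these pieces by genuine linearity.

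Concentrating first on the $+$ case, for $x$ near $1$ we have $\psi_+(x)=\tfrac12(1-x^2)+\tfrac18(A+2)(1-x^2)^2$, and linearity gives
\[
[f,\psi_+]_K(x)=\tfrac12\,[f,1-x^2]_K(x)+\tfrac18(A+2)\,[f,(1-x^2)^2]_K(x).
\]
Because the limits on the right-hand side exist (Theorem \ref{Properties of Maximal Domain} (iv),(v)), I may pass to the limit $x\to1$ termwise and substitute the stated values to obtain
\[
[f,\psi_+]_K(1)=\tfrac12\bigl(2\Lambda[f](1)-48(A+2)f(1)\bigr)+\tfrac18(A+2)\cdot192\,f(1).
\]
The essential point is that the two endpoint contributions are $-24(A+2)f(1)$ and $+24(A+2)f(1)$, which cancel exactly, leaving $[f,\psi_+]_K(1)=\Lambda[f](1)$.

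The $-$ case is entirely parallel: for $x$ near $-1$, writing $\psi_-(x)=-\tfrac12(1-x^2)-\tfrac18(B+2)(1-x^2)^2$ and combining parts (iv) and (v) at $-1$ yields $\Lambda[f](-1)-24(B+2)f(-1)+24(B+2)f(-1)=\Lambda[f](-1)$. I expect no real analytic obstacle here; the content of the statement is structural rather than computational. Indeed, the coefficients $\tfrac12$ and $\tfrac18(A+2)$ (respectively $\tfrac18(B+2)$) defining $\psi_\pm$ were evidently chosen precisely so that the point-evaluation terms $f(\pm1)$ cancel and only $\Lambda[f](\pm1)$ survives. The sole matter requiring care is justifying the termwise passage to the limit, which is immediate once one invokes the existence of the individual endpoint limits guaranteed by Theorem \ref{Properties of Maximal Domain}.
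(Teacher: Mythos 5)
Your proof is correct and is exactly the argument the paper intends: the paper states the result as ``straightforward to see,'' having just explained that $\psi_{\pm}$ were built as the linear combinations of $1-x^{2}$ and $(1-x^{2})^{2}$ for which Theorem \ref{Properties of Maximal Domain} (iv) and (v) make the $f(\pm1)$ terms cancel, which is precisely your computation. Your added care about conjugate-linearity of $[f,\cdot]_{K}$, the reality of the coefficients, and the locality of the limit at each endpoint is sound and fills in the routine details the paper omits.
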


With this result, we define%
\begin{equation}
\delta_{K}:=\{f\in\Delta_{K}\mid\lbrack f,\psi_{+}](1)=[f,\psi_{-}](-1)=0\}.
\label{little delta_k}%
\end{equation}
The next theorem, established in \cite{Everitt-Littlejohn-Loveland} and
\cite{Loveland}, states properties of functions in $\delta_{K}.$ The proof of
this result is difficult and uses hard-analytic techniques together with an
integral inequality due to Chisholm and Everitt \cite{Chisholm and Everitt}.

\begin{theorem}
\label{Properties of little delta_k}Let $f,g\in\delta_{K}.$ Then

\begin{enumerate}
\item[(a)] $f^{\prime\prime\prime}\in L^{2}(-1,1)$ and $f,f^{\prime}%
,f^{\prime\prime}\in AC[-1,1];$

\item[(b)] $\lim_{x\rightarrow\pm1}((1-x^{2})^{3}f^{\prime\prime\prime
}(x))^{(j)}=0$ for $j=1,2;$

\item[(c)] $\lim_{x\rightarrow\pm1}(1-x^{2})f^{\prime\prime\prime}(x)=0;$

\item[(d)] $\lim_{x\rightarrow\pm1}(1-x^{2})f^{\prime\prime\prime}%
(x)\overline{g}^{\prime\prime}(x)=0;$

\item[(e)] $1\in\delta_{K}$ and
\begin{align}
\lim_{x\rightarrow1}[f,1]_{K}(x)  &  =-24f^{\prime\prime}(1)-24(A+1)f^{\prime
}(1)\label{[f,1](1)}\\
\lim_{x\rightarrow-1}[f,1]_{K}(x)  &  =24f^{\prime\prime}(-1)-24(B+1)f^{\prime
}(-1); \label{[f,1](-1)}%
\end{align}

\item[(f)] $(1-x^{2})\in\delta_{K}$ and
\begin{align*}
\lim_{x\rightarrow1}[f,1-x^{2}]_{K}(x)  &  =-48(A+2)f(1)\\
\lim_{x\rightarrow-1}[f,1-x^{2}]_{K}(x)  &  =48(B+2)f(-1);
\end{align*}

\item[(g)] $(1-x^{2})^{2}\in\delta_{K}$ and $\lim_{x\rightarrow\pm
1}[f,(1-x^{2})^{2}]_{K}(x)=\pm192f(\pm1);$

\item[(h)]
\begin{align*}
\lim_{x\rightarrow1}[f,g]_{K}(x)  &  =-24(f^{\prime\prime}(1)\overline
{g}(1)-\overline{g}^{\prime\prime}(1)f(1))\\
&  -24(A+1)(f^{\prime}(1)\overline{g}(1)-\overline{g}^{\prime}(1)f(1))\\
\lim_{x\rightarrow-1}[f,g]_{K}(x)  &  =24(f^{\prime\prime}(-1)\overline
{g}(-1)-\overline{g}^{\prime\prime}(-1)f(-1))\\
&  -24(B+1)(f^{\prime}(-1)\overline{g}(-1)-\overline{g}^{\prime}%
(-1)f(-1)).\blacksquare
\end{align*}

\end{enumerate}
\end{theorem}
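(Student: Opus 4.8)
The plan is to make part (a) the engine of the whole theorem and to reduce (b)--(h) to it together with Theorems~\ref{Properties of Maximal Domain} and~\ref{Boundary Conditions and Lambda}. The organising observation is that $\Lambda[\cdot]$ is, modulo lower-order terms, a \emph{second-order} Sturm--Liouville expression in $u:=f''$: the last line of~\eqref{II-4} reads $\Lambda[f]=-((1-x^2)^3 u')'+q(x)\,u$ with $u=f''$ and $q(x):=(1-x^2)(12+\alpha(1-x^2))$, and $\Lambda[f]\in L^2(-1,1)$ by Section~\ref{Section Two}. By Theorem~\ref{Boundary Conditions and Lambda} the conditions defining $\delta_K$ are exactly $\Lambda[f](\pm1)=0$, i.e.\ homogeneous endpoint conditions for this Sturm--Liouville problem. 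Formally pairing $\Lambda[f]$ with $\overline{f''}$ and integrating the leading term by parts yields $\int(1-x^2)^3|f'''|^2$ plus endpoint contributions, giving at best the \emph{weighted} bound $\int_{-1}^1(1-x^2)^3|f'''|^2<\infty$. \textbf{The hard part is the upgrade from this weighted bound to the genuine statement $f'''\in L^2(-1,1)$}; this is exactly what the Chisholm--Everitt integral inequality~\cite{Chisholm and Everitt} supplies, and its hypotheses are met precisely because the single endpoint condition $\Lambda[f](\pm1)=0$ removes the non-$L^2$ Frobenius behaviour (the solution $\widehat\varphi_1$, for which $\widehat\varphi_1''\notin L^2$). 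This is the genuinely delicate step, carried out in~\cite{Loveland} and~\cite{Everitt-Littlejohn-Loveland}. Once $f'''\in L^2(-1,1)$ is known, the rest of (a) is free: on the bounded interval $L^2\subset L^1$, so $f''$ is an indefinite integral of $f'''$ and hence lies in $AC[-1,1]$, whence $f',f\in AC[-1,1]$ as well.

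For the endpoint-decay clauses (b)--(d) I would set $g:=(1-x^2)^3 f'''$. Differentiating $\Lambda[f]=-g'+q f''$ gives $g'=q f''-\Lambda[f]$; since $q\in C^\infty[-1,1]$, $f''\in AC[-1,1]$ by (a) and $\Lambda[f]\in AC[-1,1]$, we get $g'\in AC[-1,1]$ with $g'(\pm1)=q(\pm1)f''(\pm1)-\Lambda[f](\pm1)=0$, because $q(\pm1)=0$ and $\Lambda[f](\pm1)=0$; this is (b) for $j=1$. Comparing~\eqref{II-4} with~\eqref{I-2} gives $\Lambda''[f]=\ell_K[f]+(P f')'$ for a quadratic polynomial $P$, and the right side lies in $L^2(-1,1)$ once $f''$ is bounded, so $\Lambda'[f]\in AC[-1,1]$ as well. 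The finer statements, (b) for $j=2$ and (c), are the remaining secondary obstacle: they do not follow by soft arguments, and I would extract them from the asymptotics of the Sturm--Liouville solution $u=f''$ at the regular singular endpoints (equivalently, from the Frobenius expansions with the non-$L^2$ components already deleted), proving $(1-x^2)f'''\to0$ and then reading off $g''(\pm1)=0$ from $g''=q'f''+q f'''-\Lambda'[f]$. Clause (d) is then immediate from (c): $\overline{g}''$ is continuous, hence bounded, on $[-1,1]$ by (a) applied to $g$, so $(1-x^2)f'''\,\overline{g}''\to0$.

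Clauses (e)--(g) I would handle by first checking membership and then specialising Theorem~\ref{Properties of Maximal Domain}. Membership is a one-line computation: $\Lambda[1]\equiv0$, $\Lambda[1-x^2]=-2q(x)$, and $\Lambda[(1-x^2)^2]=-((1-x^2)^3\cdot24x)'+q(x)(12x^2-4)$ each vanish at $x=\pm1$ (they carry a factor $(1-x^2)$ or $(1-x^2)^2$ there), so Theorem~\ref{Boundary Conditions and Lambda} places $1,\,1-x^2,\,(1-x^2)^2$ in $\delta_K$. The limit formulas then drop out of Theorem~\ref{Properties of Maximal Domain}: for (f) and (g) substitute $\Lambda[f](\pm1)=0$ into parts (iv) and (v); for (e) use part (iii), namely $\lim_{x\to\pm1}[f,1]_K(x)=\lim_{x\to\pm1}(\Lambda'[f]-\pi f')$, where $\lim\Lambda'[f]=q'(\pm1)f''(\pm1)$ by (b)--(c) (the term $q f'''\to0$ by (c) and $g''\to0$ by (b)), and then use $q'(1)=-24$, $q'(-1)=24$, $\pi(1)=24(A+1)$, $\pi(-1)=24(B+1)$ to produce the stated constants.

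Finally (h) is Theorem~\ref{Properties of Maximal Domain}(viii) specialised to $\delta_K$. In the limit at $\pm1$ the terms $-\Lambda[f]\,\overline{g}'+\Lambda[\overline{g}]\,f'$ vanish, since $\Lambda[f](\pm1)=\Lambda[\overline{g}](\pm1)=0$ (the latter because $\Lambda$ has real coefficients, so $\Lambda[\overline{g}]=\overline{\Lambda[g]}$) while $f',\overline{g}'$ are finite by (a); and the term $(1-x^2)^3(f'''\overline{g}''-f''\overline{g}''')$ vanishes by Theorem~\ref{Properties of Maximal Domain}(ii), each factor $(1-x^2)^3 f^{(3)}$ and $(1-x^2)^3\overline{g}^{(3)}$ tending to $0$ against a bounded second derivative. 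What survives is $[f,1]_K(\pm1)\overline{g}(\pm1)-[\overline{g},1]_K(\pm1)f(\pm1)$, into which I substitute the expressions from (e); collecting terms gives the antisymmetric boundary forms claimed. Throughout, the only essential difficulty is part (a) (with (b) for $j=2$ and (c) as a secondary delicate point); every other clause is bookkeeping against Theorems~\ref{Properties of Maximal Domain} and~\ref{Boundary Conditions and Lambda}.
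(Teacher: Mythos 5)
Your proposal cannot be compared against a proof in the paper, because the paper gives none: Theorem~\ref{Properties of little delta_k} is imported wholesale from \cite{Loveland} and \cite{Everitt-Littlejohn-Loveland}, with only the remark that the proof is hard-analytic and rests on the Chisholm--Everitt integral inequality \cite{Chisholm and Everitt}. Measured against that, you have done well: blind, you identified exactly the crux the paper alludes to --- the Sturm--Liouville reading $\Lambda[f]=-\left((1-x^{2})^{3}u'\right)'+q\,u$ with $u=f''$ and $q(x)=(1-x^{2})(12+\alpha(1-x^{2}))$ yields only the weighted bound $\int_{-1}^{1}(1-x^{2})^{3}|f'''|^{2}<\infty$, and Chisholm--Everitt is precisely the tool that upgrades this to $f'''\in L^{2}(-1,1)$ once the defining conditions $\Lambda[f](\pm1)=0$ (Theorem~\ref{Boundary Conditions and Lambda}) have removed the Frobenius solution $\widehat{\varphi}_{1}$ with $\widehat{\varphi}_{1}''\notin L^{2}$. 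Your bookkeeping for (e)--(h) checks out against Section~\ref{Section Two}: indeed $q'(1)=-24$, $q'(-1)=24$, $\pi(1)=24(A+1)$, $\pi(-1)=24(B+1)$ reproduce \eqref{[f,1](1)} and \eqref{[f,1](-1)}; $\Lambda[1]\equiv0$, $\Lambda[1-x^{2}]=-2q$, and $\Lambda[(1-x^{2})^{2}](\pm1)=0$ settle the membership claims; and (h) follows from Theorem~\ref{Properties of Maximal Domain}(viii) with $\Lambda[f](\pm1)=\Lambda[\overline{g}](\pm1)=0$ and substitution of (e). Your soft proof of (b) for $j=1$, via $g'=qf''-\Lambda[f]$ with $q(\pm1)=\Lambda[f](\pm1)=0$, is clean and correct. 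To be clear, though, like the paper you do not actually prove part (a); your proposal is a correct reduction plus a correctly identified citation, which is exactly the paper's posture.

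One caveat on logical ordering. Your formula $\lim_{x\to\pm1}\Lambda'[f](x)=q'(\pm1)f''(\pm1)$, used in (e), consumes (c) and (b) with $j=2$, while your identity $g''=q'f''+qf'''-\Lambda'[f]$ for (b) with $j=2$ consumes $\lim\Lambda'[f]$; you escape circularity only because, as you say, you extract (c) and (b) with $j=2$ independently from the endpoint asymptotics, as the cited sources do. Note that once (a) and (c) are available, (b) with $j=2$ can in fact be closed without asymptotics: your observation $\Lambda''[f]=\ell_{K}[f]+(\pi f')'\in L^{2}(-1,1)$ gives $\Lambda'[f]\in AC[-1,1]$, so $g''$ has a finite limit $M$ at, say, $x=1$; if $M\neq0$, then since $g(1)=g'(1)=0$ one has $g(x)\sim\frac{M}{2}(x-1)^{2}$, whence $f'''=g/(1-x^{2})^{3}\sim \frac{M}{16(1-x)}\notin L^{2}$ near $x=1$, contradicting (a). So the only genuinely irreducible inputs from \cite{Loveland} and \cite{Everitt-Littlejohn-Loveland} are (a) and (c), and your sketch correctly isolates them.
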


From Theorem \ref{Properties of little delta_k}, the authors in
\cite{Everitt-Littlejohn-Loveland} and \cite{Loveland} define the operator
$T_{A,B}:\mathcal{D}(T_{A,B})\subset L_{\kappa}^{2}[-1,1]\rightarrow$
$L_{\kappa}^{2}[-1,1]$ by%
\begin{align}
T_{A,B}[f](x)  &  =\left\{
\begin{array}
[c]{ll}%
24Af^{\prime\prime}(-1)-24A(B+1)f^{\prime}(-1) & \text{if }x=-1\\
\ell_{K}[f](x) & \text{if }-1<x<1\\
24Bf^{\prime\prime}(1)+24B(A+1)f^{\prime}(1) & \text{if }x=1
\end{array}
\right. \label{Definition of T_A,B}\\
f  &  \in\mathcal{D}(T_{A,B}):=\delta_{K}.\nonumber
\end{align}

\begin{remark}
If $f\in C^{6}[-1,1]\cap\Delta_{K},$ calculations, using $($\ref{I-1}$),$
shows that
\begin{align*}
\ell_{K}[f](-1)  &  =24Af^{\prime\prime}(-1)-24A(B+1)f^{\prime}(-1)\\
\ell_{K}[f](1)  &  =24Bf^{\prime\prime}(1)+24B(A+1)f^{\prime}(1)
\end{align*}
so the definition of $T_{A,B}[f](\pm1)$ is consistent on smooth functions in
$\Delta_{K}.$
\end{remark}

\begin{theorem}
\label{T_A,B is self-adjoint} The operator $T_{A,B}$ is self-adjoint n
$L_{\kappa}^{2}[-1,1]$. Furthermore, the Krall polynomials $\{K_{n,A,B}%
\}_{n=0}^{\infty}$ form a complete set of eigenfunctions of $T_{A,B}.$

\begin{proof}
Details can be found in \cite[Chapter VII, Section 7.4]{Loveland}. The proof
consists of showing $T_{A,B}$ is symmetric whose range is all of $L_{\kappa
}^{2}[-1,1];$ by a well-known result $($see \cite[Chapter 1, Section
41]{Akhieser-Glazman}$),$ it follows that $T_{A,B}$ is self-adjoint. The
completeness of the polynomials follows from a general result in Szeg\"{o}
\cite[Theorem 3.1.5]{Szego}. Key to the surjectivity of $T_{A,B}$ is a careful
study of an auxiliary self-adjoint operator $S_{A,B}:\mathcal{D}%
(S_{A,B})\subset$ $L^{2}(-1,1)\rightarrow$ $L^{2}(-1,1)$ defined by%
\begin{align*}
S_{A,B}[f](x)  &  =\ell_{K}[f](x)\\
f\in\mathcal{D}(S_{A,B}):  &  =\{f\in\Delta_{K}\mid\lbrack f,\psi_{\pm}%
]_{K}(\pm1)=[f,1_{\pm}](\pm1)=0\}.
\end{align*}
Here $\psi_{\pm}$ are defined as in (\ref{psi+}) and (\ref{psi-}) while the
functions $1_{\pm}\in C^{6}[-1,1]\cap\Delta_{K}$ are given by%
\[
1_{+}(x)=\left\{
\begin{array}
[c]{cc}%
0 & \text{if }x\text{ is near }-1\\
1 & \text{if }x\text{ is near }1
\end{array}
\right.  \text{ and }1_{-}(x)=%
\begin{array}
[c]{cc}%
1 & \text{if }x\text{ is near }-1\\
0 & \text{if }x\text{ is near }1.
\end{array}
\]
The set of functions $\{\psi_{\pm},1_{\pm}\}$ are linearly independent modulo
the minimal domain $\mathcal{D}(T_{0}),$ and since the deficiency index of
$T_{0}$ is $4,$ it follows from the GKN Theorem \cite[Chapter V]{Naimark} that
$S_{A,B}$ is self-adjoint.
\end{proof}
\end{theorem}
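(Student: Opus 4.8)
The plan is to treat the two assertions separately: first that $T_{A,B}$ is self-adjoint, then that the Krall polynomials form a complete set of eigenfunctions. For self-adjointness I would use the standard deficiency-index criterion (the ``well-known result'' of \cite[Chapter~1, Section~41]{Akhieser-Glazman}): a symmetric operator $T$ on a Hilbert space $\mathcal{H}$ is self-adjoint once $\operatorname{Ran}(T-\lambda I)=\mathcal{H}$ for a single non-real $\lambda$ together with its conjugate. One should note at the outset that $T_{A,B}$ is \emph{not} itself surjective: by $(\ref{I-3})$ we have $\lambda_0=\lambda_1=0$, so every polynomial of degree $\le 1$ lies in $\ker T_{A,B}$, and the range condition must therefore be applied to a shift $T_{A,B}-\lambda I$. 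Thus there are two things to establish, symmetry and the range condition.

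Symmetry I would verify by direct computation. Expanding $(T_{A,B}f,g)_\kappa-(f,T_{A,B}g)_\kappa$ with the inner product $(\ref{I-7})$, the interior integrals combine, through Green's formula $(\ref{II-1.5})$, into $[f,g]_K(1)-[f,g]_K(-1)$, while the two atoms at $x=\pm1$ contribute the endpoint terms prescribed by $(\ref{Definition of T_A,B})$ (here the weights $1/A$ and $1/B$ in $(\ref{I-7})$ precisely cancel the leading $24A$ and $24B$ in $(\ref{Definition of T_A,B})$). Inserting the explicit limits of $[f,g]_K(x)$ from Theorem~\ref{Properties of little delta_k}(h) and collecting terms, the whole expression should collapse to $0$; indeed the constants $24A,\,24A(B+1),\,24B,\,24B(A+1)$ are exactly those for which the atomic contributions annihilate the bilinear concomitant. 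This step is routine bookkeeping once part (h) is in hand.

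The range condition is the main obstacle, and here I would route the argument through the auxiliary operator $S_{A,B}$ on $L^2(-1,1)$ cut out by the four conditions $[f,\psi_\pm]_K(\pm1)=[f,1_\pm]_K(\pm1)=0$. Since $\ell_K[\cdot]$ is limit-$5$ at both endpoints, the minimal operator $T_0$ has deficiency index $4$, and the four functions $\{\psi_\pm,1_\pm\}$ are linearly independent modulo $\mathcal{D}(T_0)$, so the classical GKN theorem \cite[Chapter~V]{Naimark} makes $S_{A,B}$ self-adjoint. The bridge to $T_{A,B}$ is the pair of identities $(T_{A,B}f)(1)=-B\,[f,1_+]_K(1)$ and $(T_{A,B}f)(-1)=A\,[f,1_-]_K(-1)$, read off from Theorem~\ref{Properties of little delta_k}(e); these show that $\mathcal{D}(S_{A,B})$ is exactly the set of $f\in\delta_K$ whose $\mathbb{C}^2$ boundary data $\bigl((T_{A,B}f)(-1),(T_{A,B}f)(1)\bigr)$ vanish, and that there $T_{A,B}$ acts as $S_{A,B}$ into the $L^2(-1,1)$ summand of $L_\kappa^2[-1,1]\cong L^2(-1,1)\oplus\mathbb{C}^2$. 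Given a target $F=(G,c_{-1},c_1)$ and a non-real $\lambda$, I would seek the preimage in the form $f=u+\alpha w_++\beta w_-$, where $u=(S_{A,B}-\lambda I)^{-1}$ applied to $G$ controls the $L^2(-1,1)$ component and $w_\pm\in\delta_K$ are two fixed functions with non-vanishing endpoint data, the scalars $\alpha,\beta$ being fixed by a $2\times2$ linear system matching the $\mathbb{C}^2$ component. The delicate point is that the shift $-\lambda I$ feeds the endpoint \emph{values} $u(\pm1)$ back into that system, so surjectivity ultimately rests on showing the associated $2\times2$ determinant is non-zero. The genuinely hard inputs are thus the self-adjointness of $S_{A,B}$, which depends on Theorem~\ref{Properties of little delta_k} (in particular $f'''\in L^2$ and the finiteness of the endpoint limits) and is itself proved by hard analysis together with the Chisholm--Everitt integral inequality \cite{Chisholm and Everitt}, and this non-degeneracy of the endpoint correction.

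Finally, for the eigenfunction assertion I would first check $K_{n,A,B}\in\delta_K$: since $K_n$ is a polynomial, the factors $(1-x^2)$ in $(\ref{II-4})$ force $\Lambda[K_n](\pm1)=0$, whence $[K_n,\psi_\pm]_K(\pm1)=0$ by Theorem~\ref{Boundary Conditions and Lambda}. As $\ell_K[K_n]=\lambda_n K_n$ holds identically and, by the Remark following $(\ref{Definition of T_A,B})$, the endpoint action of $T_{A,B}$ on smooth functions coincides with $\ell_K[\cdot]$, we get $(T_{A,B}K_n)(\pm1)=\lambda_n K_n(\pm1)$ and therefore $T_{A,B}K_n=\lambda_n K_n$ in $L_\kappa^2[-1,1]$, so the $K_n$ are genuine eigenvectors. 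Their mutual orthogonality in $L_\kappa^2[-1,1]$ is the orthogonality of the Krall polynomials with respect to $(\ref{I-7})$, and completeness reduces to the density of polynomials in $L^2(d\kappa)$; since $\kappa$ is supported on the compact interval $[-1,1]$, this density is supplied by Szeg\H{o}'s theorem \cite[Theorem~3.1.5]{Szego}, which completes the argument.
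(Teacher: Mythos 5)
Your skeleton is essentially the paper's (and Loveland's): verify symmetry directly from Green's formula and Theorem~\ref{Properties of little delta_k}(h), establish a range condition via the auxiliary operator $S_{A,B}$ --- self-adjoint by the classical GKN theorem because $\{\psi_{\pm},1_{\pm}\}$ is linearly independent modulo $\mathcal{D}(T_{0})$ and $\mathrm{def}(T_{0})=4$ --- invoke the Akhieser--Glazman criterion, and get completeness from Szeg\H{o} \cite[Theorem 3.1.5]{Szego}. Two of your additions improve on the sketch as printed. First, the bridging identities $(T_{A,B}f)(1)=-B\,[f,1_{+}]_{K}(1)$ and $(T_{A,B}f)(-1)=A\,[f,1_{-}]_{K}(-1)$ do follow from Theorem~\ref{Properties of little delta_k}(e) and correctly exhibit $\mathcal{D}(S_{A,B})$ as the kernel of the boundary-data map on $\delta_{K}$. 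Second, you are right that the range criterion cannot be applied to $T_{A,B}$ itself: $\lambda_{0}=0$, so $K_{0}\in\ker T_{A,B}$, and a symmetric operator with nontrivial kernel is never surjective; the paper's phrase ``range is all of $L_{\kappa}^{2}[-1,1]$'' must be read for a shifted operator. (As an aside, your claim that $\lambda_{1}=0$ is faithful to $(\ref{I-3})$ as printed, but a direct computation from $(\ref{I-1})$ gives $\ell_{K}[x]=(24AB+12A+12B)x+12B-12A\neq0$, so $(\ref{I-3})$ apparently misprints $n(n-1)$ for $n(n+1)$; $\lambda_{0}=0$ already suffices for your point.)

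The one step you work out in detail, however, fails as written. If $u=(S_{A,B}-\lambda I)^{-1}G$ is fixed and you set $f=u+\alpha w_{+}+\beta w_{-}$ with $w_{\pm}\in\delta_{K}$ arbitrary functions with nonvanishing endpoint data, then the interior component of $(T_{A,B}-\lambda I)f$ is $G+\alpha(\ell_{K}-\lambda)[w_{+}]+\beta(\ell_{K}-\lambda)[w_{-}]$, not $G$: the correctors perturb the $L^{2}(-1,1)$ component, not merely the $\mathbb{C}^{2}$ data, and your ``delicate point'' misidentifies the feedback as living only in the endpoint values $u(\pm1)$. The repair is standard --- let $u$ depend on the scalars, $u_{\alpha,\beta}:=(S_{A,B}-\lambda I)^{-1}\bigl(G-\alpha(\ell_{K}-\lambda)[w_{+}]-\beta(\ell_{K}-\lambda)[w_{-}]\bigr)$, so the interior equation holds identically and the two boundary equations become an affine $2\times2$ system in $(\alpha,\beta)$ --- but then the non-degeneracy of that system is the genuine analytic content, and you leave it unproven. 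In fairness, the paper does the same, deferring it (together with the hard inputs behind Theorem~\ref{Properties of little delta_k}, notably $f'''\in L^{2}(-1,1)$ and the Chisholm--Everitt inequality \cite{Chisholm and Everitt}) to \cite[Chapter VII]{Loveland}. Your eigenfunction and completeness arguments ($\Lambda[K_{n}](\pm1)=0$ from the $(1-x^{2})$ factors in $(\ref{II-4})$, hence $K_{n}\in\delta_{K}$ by Theorem~\ref{Boundary Conditions and Lambda}; density of polynomials in $L^{2}(d\kappa)$ for the compactly supported measure $\kappa$) are correct and match the paper.
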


Motivated by the Krall example and other higher-order differential equations
having orthogonal polynomial eigenfunctions, Littlejohn and Wellman developed
a generalized GKN theory \cite{Littlejohn-Wellman2019} that directly handles
these examples. Indeed, applying Theorem \ref{GKNEM}, given below, is
systematic and algorithmic. In the next section, we describe this theory and,
in Section \ref{Section Five}, we apply this new theory to further discuss the
Krall example.

\section{A Brief Review of the GKN-EM Theory\label{Section Four}}

In this section, we give a brief summary of the general GKN-EM theory
developed recently by Littlejohn and Wellman in \cite{Littlejohn-Wellman2019}
based on important earlier work of W. N. Everitt and L. Markus in
\cite{Everitt-Markus1999}, \cite{Everitt-MarkusBVP} and
\cite{Everitt-MarkusComplexSymplectic}. We remark that there are other
approaches, generalizing the GKN theory, in constructing self-adjoint
extensions of symmetric operators . One of these approaches is the general
theory of \textit{boundary triples}; we refer the reader to the recent
monograph \cite{Behrndt-Hassi-deSnoo} for details.

Throughout this section, we assume that $T_{0}$ and $T_{1}$ are densely
defined linear operators, with respective domains $\mathcal{D}(T_{0})$ and
$\mathcal{D}(T_{1}),$ in a Hilbert space $(H,\langle\cdot,\cdot\rangle_{H})$
satisfying the conditions:

\begin{enumerate}
\item[(a)] $T_{0}$ is a closed and symmetric operator;

\item[(b)] The deficiency indices of $T_{0}$ are equal and finite and denoted
by their common value \textrm{def}$(T_{0});$

\item[(c)] $T_{0}\subseteq T_{1}$ with $T_{0}^{\ast}=T_{1}$ and $T_{1}^{\ast
}=T_{0}.$
\end{enumerate}

We call the Hilbert space $(H,\langle\cdot,\cdot\rangle_{H})$ the \textit{base
space }and we will refer to the operators $T_{0}$ and $T_{1}$, respectively,
as the \textit{minimal operator} and \textit{maximal operators} since these
are the terms used in the GKN theory when both operators are generated by a
Lagrangian symmetric ordinary differential expression. To be clear, however,
it is not necessary in our situation that either of the operators $T_{0}$ and
$T_{1}$ are differential operators. We define the symplectic form
$[\cdot,\cdot]_{H}:\mathcal{D}(T_{1})$ $\mathrm{x}$ $\mathcal{D}%
(T_{1})\rightarrow\mathbb{C}$ by%
\begin{equation}
\lbrack x,y]_{H}:=\langle T_{1}x,y\rangle_{H}-\langle x,T_{1}y\rangle_{H}%
\quad(x,y\in\mathcal{D}(T_{1})). \label{Symplectic form for H}%
\end{equation}
Observe that (\ref{Symplectic form for H}) is a generalization of the
classical Green's formula (see, for example, (\ref{II-1.5})).

Let $(W,\langle\cdot,\cdot\rangle_{W})$ be a finite-dimensional complex
Hilbert space with
\begin{equation}
\dim W\leq\mathrm{def}(T_{0}). \label{IV-1}%
\end{equation}
We call $W$ the \textit{extension space. }Let $\{\xi_{j}\mid j=1,\ldots,\dim
W\}$ be an orthonormal basis of $W$ and let $\mathcal{B}:W\rightarrow W$ be a
fixed, self-adjoint operator.

The direct sum space $H\oplus W=\{(x,a)\mid x\in H,a\in W\}$ is called the
\textit{extended space}; it is well known that $H\oplus W$ is a Hilbert space
when endowed with the inner product
\[
\langle(x,a),(y,b)\rangle_{H\oplus W}:=\langle x,y\rangle_{H}+\langle
a,b\rangle_{W}.
\]
It is this Hilbert space $H\oplus W$ that we seek to extend the classical GKN
theory (see \cite{Naimark}).

As with the classic GKN theory presented in \cite[Chapter V]{Naimark}, we call
a collection of vectors $\{t_{j}\mid j=1,\ldots,\mathrm{def}(T_{0}%
)\}\subseteq\mathcal{D}(T_{1})$ is called a \textit{GKN set} for $T_{0}$ if

\begin{enumerate}
\item[(i)] the set $\{t_{j}\mid j=1,\ldots,\mathrm{def}(T_{0})\}$ is linearly
independent modulo the minimal domain $D(T_{0})$; that is to say:\textit{ }%
\[
\text{if }\sum_{j=1}^{\mathrm{def}(T_{0})}\alpha_{j}t_{j}\in\mathcal{D}%
(T_{0})\text{ then }\alpha_{j}=0\text{ for }j=1,\ldots,\mathrm{def}(T_{0});
\]
and

\item[(ii)] the set $\{t_{j}\mid j=1,\ldots,\mathrm{def}(T_{0})\}$ satisfies
the symmetry conditions
\[
\lbrack t_{i},t_{j}]_{H}=0\quad(i,j=1,\ldots,\mathrm{def}(T_{0})).
\]

\end{enumerate}

If $G\subseteq D(T_{1})$ is a GKN set for $T_{0}$, then a non-empty, proper
subset $P\subseteq G$ is called a \textit{partial GKN set} for $T_{0}$. As we
will see, partial GKN sets play an important role in our generalized GKN
theory in $H\oplus W.$

We now list several definitions necessary before we state the main result
(Theorem \ref{GKNEM}) below.

\begin{enumerate}
\item $P=\{t_{j}\mid j=1,\ldots,\dim W\}$ is a partial GKN set for $T_{0}$;

\item $\Phi_{0}:=\mathcal{D}(T_{0})+\mathrm{span}\{t_{j}\mid j=1,\ldots,\dim
W\}$;

\item $\Psi:\Phi_{0}\rightarrow W$ is defined to be
\[
\Psi\left(  x_{0}+\sum_{j=1}^{\dim W}\alpha_{j}t_{j}\right)  :=\sum
_{j=1}^{\dim W}\alpha_{j}\xi_{j}\quad(x_{0}\in\mathcal{D}(T_{0}));
\]

\item $\Omega:\mathcal{D}(T_{1})\rightarrow W$ is given by
\[
\Omega x:=\sum_{j=1}^{\dim W}[x,t_{j}]_{H}\xi_{j}\quad(x\in\mathcal{D}%
(T_{1}));
\]

\item $\widehat{T}_{1}:\mathcal{D}(\widehat{T}_{1})\subseteq H\oplus
W\rightarrow H\oplus W$ is the maximal operator in $H\oplus W$ defined by
\begin{align*}
\widehat{T}_{1}(x,a)  &  :=(T_{1}x,\mathcal{B}a-\Omega x)\\
\mathcal{D}(\widehat{T}_{1})  &  :=\{(x,a)\mid x\in\mathcal{D}(T_{1});\;a\in
W\};
\end{align*}
It is clear from the definition of $\mathcal{D}(\widehat{T}_{1})$ why
$\widehat{T}_{1}$ is called `maximal'. Note that $\widehat{T}_{1}$ is
dependent upon a partial GKN set and the choice of self-adjoint operator
$\mathcal{B}$ in $W.$ Consequently, there is a continuum of such maximal
operators in $H\oplus W.$

\item $\widehat{T}_{0}:\mathcal{D}(\widehat{T}_{0})\subseteq H\oplus
W\rightarrow H\oplus W$ is the so-called \textit{minimal operator} in $H\oplus
W$ defined by
\begin{align*}
\widehat{T}_{0}(x,\Psi x)  &  :=(T_{1}x,\mathcal{B}\Psi x)\\
\mathcal{D}(\widehat{T}_{0})  &  :=\{(x,\Psi x)\mid x\in\Phi_{0}\};
\end{align*}
(see Theorem \ref{GKNEM}) below for the justification of the term `minimal'
for $\widehat{T}_{0}$)

\item $[\cdot,\cdot]_{H\oplus W}$ is the symplectic form defined by
\begin{align}
\lbrack(x,a),(y,b)]_{H\oplus W}:  &  =\langle\widehat{T}_{1}(x,a),(y,b)\rangle
_{H\oplus W}-\langle(x,a),\widehat{T}_{1}(y,b)\rangle_{H\oplus W}%
\label{IV-2.5}\\
&  =[x,y]_{H}-\langle\Omega x,b\rangle_{W}+\langle a,\Omega y\rangle_{W}%
,\quad(((x,a),(y,b))\in\mathcal{D}(\widehat{T}_{1})); \label{IV-3}%
\end{align}
the equality of (\ref{IV-2.5}) and (\ref{IV-3}) follows from the
self-adjointness of $\mathcal{B}$.
\end{enumerate}

\begin{theorem}
\label{Minimal Operator T_0 hat}Under the above definitions and assumptions,
\end{theorem}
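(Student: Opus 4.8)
The plan is to show that $\widehat{T}_0$ genuinely deserves the name \emph{minimal operator} by verifying, in order, that it is densely defined, symmetric, and closed, and that the pair $(\widehat{T}_0,\widehat{T}_1)$ is mutually adjoint, $\widehat{T}_0^{\ast}=\widehat{T}_1$ and $\widehat{T}_1^{\ast}=\widehat{T}_0$, with $\mathrm{def}(\widehat{T}_0)=\mathrm{def}(T_0)$. The engine for all of this is the identity (\ref{IV-3}) for the extended symplectic form, together with two facts I would isolate at the outset: (i) for $x_0\in\mathcal{D}(T_0)$ and any $w\in\mathcal{D}(T_1)$ one has $[x_0,w]_H=0$, which is exactly the statement $T_0=T_1^{\ast}$ from hypothesis (c); and (ii) $[t_i,t_j]_H=0$, which holds because $P$ is a partial GKN set. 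Combining (i) and (ii), for every $x=x_0+\sum_i\alpha_i t_i\in\Phi_0$ and every $j$ one gets $[x,t_j]_H=0$, so $\Omega$ vanishes identically on $\Phi_0$. This single observation immediately gives $\widehat{T}_0\subseteq\widehat{T}_1$, since on $\mathcal{D}(\widehat{T}_0)$ the defining formulas for the two operators differ only by the term $\Omega x$.

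Next I would dispatch density and symmetry. Density of $\mathcal{D}(\widehat{T}_1)=\mathcal{D}(T_1)\oplus W$ is clear from density of $\mathcal{D}(T_1)$ in $H$; for $\mathcal{D}(\widehat{T}_0)$, given $(h,w)\in H\oplus W$ I write $w=\sum_j\alpha_j\xi_j$ and approximate the first coordinate of $(x_0+\sum_j\alpha_j t_j,\,w)$ by choosing $x_0$ from the dense set $\mathcal{D}(T_0)$. For symmetry I restrict (\ref{IV-3}) to $\mathcal{D}(\widehat{T}_0)\times\mathcal{D}(\widehat{T}_0)$; because $\Omega$ vanishes on $\Phi_0$ the form collapses to $[x,y]_H$, and the same two facts (i)--(ii) show $[x,y]_H=0$ there. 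Hence $[(x,\Psi x),(y,\Psi y)]_{H\oplus W}=0$, so $\widehat{T}_0$ is symmetric.

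The heart of the argument is the adjoint identity $\widehat{T}_0^{\ast}=\widehat{T}_1$, and this is where I expect the real work. One inclusion, $\widehat{T}_1\subseteq\widehat{T}_0^{\ast}$, follows by showing $[(x,\Psi x),(y,b)]_{H\oplus W}=0$ for all $(x,\Psi x)\in\mathcal{D}(\widehat{T}_0)$ and $(y,b)\in\mathcal{D}(\widehat{T}_1)$: using $\Omega x=0$, (\ref{IV-3}) reduces this to $[x,y]_H+\langle\Psi x,\Omega y\rangle_W$, and expanding both terms in the $\alpha_i$ and applying the conjugate-symmetry $\overline{[a,b]_H}=-[b,a]_H$ shows the two sums cancel. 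The reverse inclusion $\widehat{T}_0^{\ast}\subseteq\widehat{T}_1$ is the crux. Given $(y,b)$ in the adjoint domain with $\widehat{T}_0^{\ast}(y,b)=(z,c)$, I would first test the defining equation against $(x_0,0)$ with $x_0\in\mathcal{D}(T_0)$; the $W$-terms drop out and I recover the classical condition $\langle T_0 x_0,y\rangle=\langle x_0,z\rangle$, whence $y\in\mathcal{D}(T_0^{\ast})=\mathcal{D}(T_1)$ and $z=T_1 y$. Then I feed in a general $x=x_0+\sum_i\alpha_i t_i$, substitute $z=T_1 y$ to turn the $H$-part into $[x,y]_H=\sum_i\alpha_i[t_i,y]_H$, move $\mathcal{B}$ onto $b$ using its self-adjointness, and match coefficients of the independent scalars $\alpha_i$. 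This yields $[t_i,y]_H=\langle\xi_i,c-\mathcal{B}b\rangle_W$ for every $i$; expanding $c-\mathcal{B}b$ in the orthonormal basis $\{\xi_i\}$ and applying $\overline{[t_i,y]_H}=-[y,t_i]_H$ identifies $c-\mathcal{B}b=-\Omega y$, which is precisely the second component of $\widehat{T}_1(y,b)$. Hence $(y,b)\in\mathcal{D}(\widehat{T}_1)$ and $\widehat{T}_1(y,b)=(z,c)$. The main obstacle is essentially bookkeeping, namely keeping the sesquilinearity and the sign in $\overline{[a,b]_H}=-[b,a]_H$ straight so the coefficient-matching closes exactly; conceptually it is the separation of the arbitrary scalars $\alpha_i$ that forces the answer.

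Finally I would record closedness and the deficiency count. For closedness, a graph-limit $(x_n,\Psi x_n)\to(x,a)$ with $\widehat{T}_0(x_n,\Psi x_n)\to(z,c)$ forces $\alpha_{i,n}\to\alpha_i$ (basis $\{\xi_i\}$), so $x_{0,n}=x_n-\sum_i\alpha_{i,n}t_i$ converges in the graph norm of $T_1$; closedness of $T_0$ from hypothesis (a) then places the limit $x_0$ in $\mathcal{D}(T_0)$, giving $(x,a)\in\mathcal{D}(\widehat{T}_0)$. Consequently $\widehat{T}_1^{\ast}=\widehat{T}_0^{\ast\ast}=\widehat{T}_0$. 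For the deficiency index, since $\widehat{T}_0^{\ast}=\widehat{T}_1$ it suffices to compute $\dim\bigl(\mathcal{D}(\widehat{T}_1)/\mathcal{D}(\widehat{T}_0)\bigr)$: modulo $\mathcal{D}(T_0)\oplus\{0\}$ the numerator has dimension $2\,\mathrm{def}(T_0)+\dim W$, while the image of $\mathcal{D}(\widehat{T}_0)$ has dimension $\dim W$ (the $t_i$ independent mod $\mathcal{D}(T_0)$ and the $\xi_i$ independent), so the quotient has dimension $2\,\mathrm{def}(T_0)$, giving $\mathrm{def}(\widehat{T}_0)=\mathrm{def}(T_0)$.
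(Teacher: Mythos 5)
Two remarks before the verdict: the paper itself states Theorem \ref{Minimal Operator T_0 hat} \emph{without proof} --- it is quoted from \cite{Littlejohn-Wellman2019} as part of the review in Section \ref{Section Four} --- so there is no in-paper argument to compare against, and your proposal must stand on its own. Most of it does. Your opening observation that $\Omega$ vanishes identically on $\Phi_0$ (because $T_0=T_1^{\ast}$ kills $[x_0,t_j]_H$ and the partial-GKN symmetry kills $[t_i,t_j]_H$) is correct and is indeed the engine: it gives $\widehat{T}_0\subseteq\widehat{T}_1$ and symmetry at once. Both adjoint inclusions check out, including the sign bookkeeping: $\overline{[u,v]_H}=-[v,u]_H$ does make the two sums cancel in $\widehat{T}_1\subseteq\widehat{T}_0^{\ast}$, and in the reverse inclusion testing against $(x_0,0)$ gives $y\in\mathcal{D}(T_1)$, $z=T_1y$, after which $[t_i,y]_H=\langle\xi_i,c-\mathcal{B}b\rangle_W$ and the basis expansion correctly force $c=\mathcal{B}b-\Omega y$. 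The closedness argument via convergence of the coefficients $\alpha_{i,n}$ (finite-dimensionality of $W$) and closedness of $T_0$ is also sound, and then $\widehat{T}_1^{\ast}=\widehat{T}_0^{\ast\ast}=\widehat{T}_0$ follows as you say.

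The genuine gap is in part (b). Your quotient computation is arithmetically right --- $\dim\bigl(\mathcal{D}(\widehat{T}_1)/\mathcal{D}(\widehat{T}_0)\bigr)=2\,\mathrm{def}(T_0)$ --- but by von Neumann's formula this only yields $n_{+}(\widehat{T}_0)+n_{-}(\widehat{T}_0)=2\,\mathrm{def}(T_0)$. The theorem asserts the two deficiency indices are \emph{equal}, and for an abstract closed symmetric operator (no real-coefficient differential structure or conjugation is assumed here) equality does not follow from knowing the sum: your argument as written is consistent with $n_{\pm}=\mathrm{def}(T_0)\pm k$ for any admissible $k$, and appealing to the existence of self-adjoint extensions would be circular, since those are exactly what the GKN-EM Theorem \ref{GKNEM} is about to construct. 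The repair is a direct computation of the deficiency spaces: $(y,b)\in\ker(\widehat{T}_1\mp iI)$ if and only if $T_1y=\pm iy$ and $(\mathcal{B}\mp iI)b=\Omega y$; since $\mathcal{B}$ is self-adjoint on the finite-dimensional space $W$, the operator $\mathcal{B}\mp iI$ is invertible, so $b=(\mathcal{B}\mp iI)^{-1}\Omega y$ is uniquely determined by $y$, whence $\dim\ker(\widehat{T}_1\mp iI)=\dim\ker(T_1\mp iI)=\mathrm{def}(T_0)$. This gives equality, finiteness, and the stated common value in one stroke, and turns your quotient count into a consistency check rather than the proof.
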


\begin{enumerate}
\item[(a)] $\widehat{T}_{0}$ is a closed, symmetric operator satisfying
$\widehat{T}_{0}\subseteq\widehat{T}_{1}$ with $(\widehat{T}_{0})^{\ast
}=\widehat{T}_{1}$ and $(\widehat{T}_{1})^{\ast}=\widehat{T}_{0}$;

\item[(b)] The deficiency indices of $\widehat{T}_{0}$ are equal, finite and
satisfy $\mathrm{def}(\widehat{T}_{0})=\mathrm{def}(T_{0})$.
\end{enumerate}

From part (b) of this theorem, we see that the minimal operator $\widehat{T}%
_{0}$ (respectively, the maximal operator $\widehat{T}_{1})$ has self-adjoint
extensions (respectively, self-adjoint restrictions) in $H\oplus W.$ The
following theorem is a generalization of the classic Glazman-Krein-Naimark
theorem for the setting $H\oplus W$ with one important difference: the
operators $\widehat{T}_{0}$ and $\widehat{T}_{1}$ are not assumed to be
differential operators. The authors in \cite{Littlejohn-Wellman2019} refer to
this generalization as the GKN-EM Theorem.

\begin{theorem}
\label{GKNEM} \quad\newline

\begin{enumerate}
\item[(a)] Suppose $\widehat{T}$ is a self-adjoint extension of $\widehat{T}%
_{0}$ satisfying $\widehat{T}_{0}\subseteq\widehat{T}\subseteq\widehat{T}_{1}%
$. Then there exists a GKN set $\{(x_{j},a_{j})\mid j=1,\ldots,\mathrm{def}%
(T_{0})\}\subseteq\mathcal{D}(\widehat{T}_{1})$ such that
\begin{align}
\widehat{T}(x,a)  &  =(T_{1}x,\mathcal{B}a-\Omega x)\label{IV-4}\\
\mathcal{D}(\widehat{T})  &  =\{(x,a)\in\mathcal{D}(\widehat{T}_{1}%
)\mid\lbrack(x,a),(x_{j},a_{j})]_{H\oplus W}=0\;(j,1,\ldots,\mathrm{def}%
(T_{0}))\}. \label{IV-5}%
\end{align}

\item[(b)] If $\widehat{T}$ is defined as in $($\ref{IV-3}$)$ and
$($\ref{IV-4}$),$ where $\{(x_{j},a_{j})\mid j=1,\ldots,\mathrm{def}%
(T_{0})\}\subseteq\mathcal{D}(\widehat{T}_{1})$ is a GKN, then $\widehat{T}$
is a self-adjoint extension of $\widehat{T}_{0}$ in $H\oplus W$.
\end{enumerate}
\end{theorem}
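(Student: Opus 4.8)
The plan is to recognize the GKN--EM theorem as nothing more than the \emph{abstract} Glazman--Krein--Naimark correspondence, applied not to the original pair $(T_0,T_1)$ in $H$ but to the extended pair $(\widehat{T}_0,\widehat{T}_1)$ in $H\oplus W$. The decisive enabling fact is Theorem \ref{Minimal Operator T_0 hat}: it certifies that $(\widehat{T}_0,\widehat{T}_1,H\oplus W)$ satisfies exactly the three abstract hypotheses imposed at the outset of the section---$\widehat{T}_0$ is closed and symmetric, $(\widehat{T}_0)^{\ast}=\widehat{T}_1$ and $(\widehat{T}_1)^{\ast}=\widehat{T}_0$---and that its deficiency index equals $d:=\mathrm{def}(T_0)$. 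Because the GKN correspondence uses \emph{only} these abstract properties, and never the circumstance that the operators are differential operators, it transfers verbatim to the extended setting; the remaining content of the theorem is then a translation between that correspondence and the concrete data $\Omega$, $\Psi$, $\mathcal{B}$ and $[\cdot,\cdot]_{H\oplus W}$.

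First I would record the correspondence in the form I need. Set $\mathsf{S}:=\mathcal{D}(\widehat{T}_1)/\mathcal{D}(\widehat{T}_0)$ and equip it with the skew-Hermitian form induced by $[\cdot,\cdot]_{H\oplus W}$. Non-degeneracy of this form is \emph{equivalent} to the adjoint relation $(\widehat{T}_1)^{\ast}=\widehat{T}_0$: an element of $\mathcal{D}(\widehat{T}_1)$ lies in the radical precisely when it is symplectically orthogonal to all of $\mathcal{D}(\widehat{T}_1)$, i.e.\ when it lies in $\mathcal{D}((\widehat{T}_1)^{\ast})=\mathcal{D}(\widehat{T}_0)$, so Theorem \ref{Minimal Operator T_0 hat}(a) supplies non-degeneracy at once. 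Von Neumann's direct-sum decomposition $\mathcal{D}(\widehat{T}_1)=\mathcal{D}(\widehat{T}_0)+\widehat{\mathcal{N}}_{+}+\widehat{\mathcal{N}}_{-}$ into minimal domain plus deficiency subspaces then gives $\dim\mathsf{S}=2d$, and the standard theory identifies the self-adjoint extensions $\widehat{T}$ with $\widehat{T}_0\subseteq\widehat{T}\subseteq\widehat{T}_1$ as exactly the maximal isotropic (Lagrangian) subspaces of $\mathsf{S}$, each necessarily of dimension $d$.

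For part (a), given a self-adjoint $\widehat{T}$ I would pass to the image $\mathcal{D}(\widehat{T})/\mathcal{D}(\widehat{T}_0)$, a Lagrangian subspace of dimension $d$, and lift any basis to vectors $(x_j,a_j)\in\mathcal{D}(\widehat{T}_1)$ for $j=1,\ldots,d$. Isotropy yields the symmetry conditions $[(x_i,a_i),(x_j,a_j)]_{H\oplus W}=0$, and linear independence modulo $\mathcal{D}(\widehat{T}_0)$ is automatic, so $\{(x_j,a_j)\}$ is a GKN set. Since $\widehat{T}\subseteq\widehat{T}_1$, its action is forced to be $\widehat{T}(x,a)=\widehat{T}_1(x,a)=(T_1x,\mathcal{B}a-\Omega x)$, which is (\ref{IV-4}); and since a Lagrangian subspace coincides with its own symplectic annihilator modulo $\mathcal{D}(\widehat{T}_0)$, non-degeneracy yields the domain description (\ref{IV-5}), namely $(x,a)\in\mathcal{D}(\widehat{T})$ if and only if $[(x,a),(x_j,a_j)]_{H\oplus W}=0$ for every $j$.

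For part (b) I would run this argument in reverse: starting from a GKN set, define $\widehat{T}$ by (\ref{IV-4}) and (\ref{IV-5}). The symmetry conditions make $\mathrm{span}\{(x_j,a_j)\}$ isotropic modulo $\mathcal{D}(\widehat{T}_0)$, while (\ref{IV-5}) realizes $\mathcal{D}(\widehat{T})/\mathcal{D}(\widehat{T}_0)$ as the symplectic annihilator of that span; a dimension count---the $d$ independent functionals $(x,a)\mapsto[(x,a),(x_j,a_j)]_{H\oplus W}$ cut $\mathsf{S}$ from dimension $2d$ down to $d$, by non-degeneracy---shows this subspace is \emph{maximal} isotropic, whence $\widehat{T}=\widehat{T}^{\ast}$. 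The single place where genuine analysis, rather than bookkeeping, resides is the abstract correspondence itself---the non-degeneracy of the induced form together with the identification of self-adjoint extensions with Lagrangian subspaces---and this is the main obstacle. I would either invoke it directly from \cite{Littlejohn-Wellman2019} (equivalently, in the complex-symplectic formulation of \cite{Everitt-MarkusComplexSymplectic}), or reprove it from von Neumann's extension theory as sketched above; everything else in Theorem \ref{GKNEM} is then a routine unwinding of the definitions of $\Omega$, $\Psi$, $\mathcal{B}$ and $[\cdot,\cdot]_{H\oplus W}$.
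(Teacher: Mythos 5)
The paper itself contains no proof of Theorem \ref{GKNEM}: Section \ref{Section Four} is expressly a review, and the theorem is imported from \cite{Littlejohn-Wellman2019}, whose argument (as the introduction indicates) runs through the Everitt--Markus identification of the self-adjoint extensions $\widehat{T}_{0}\subseteq\widehat{T}\subseteq\widehat{T}_{1}$ with complete Lagrangian subspaces of the complex symplectic quotient $\mathcal{D}(\widehat{T}_{1})/\mathcal{D}(\widehat{T}_{0})$. Your proposal reconstructs exactly that intended argument, and correctly: Theorem \ref{Minimal Operator T_0 hat} supplies the abstract hypotheses for the extended pair in $H\oplus W$, your derivation of nondegeneracy of the induced form from $(\widehat{T}_{1})^{\ast}=\widehat{T}_{0}$ is right, the equality of deficiency indices (via Theorem \ref{Minimal Operator T_0 hat}(b)) is what makes the maximal isotropic subspaces have dimension $\mathrm{def}(T_{0})$ and satisfy $L=L^{\perp}$, and the annihilator/dimension-count bookkeeping in parts (a) and (b) is sound. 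So the proposal is correct and follows essentially the same route as the cited proof, filling in honestly the one genuinely analytic ingredient (the Lagrangian--self-adjoint correspondence) rather than merely citing it.
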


In the next section, we use this theorem to construct the self-adjoint
operator $\widehat{T}_{A,B}$, generated by the Krall differential expression
$\ell_{K}[\cdot],$ in
\[
L_{\kappa}^{2}[-1,1]\text{ }\simeq\text{ }L^{2}(-1,1)\oplus\mathbb{C}^{2}%
\]
having the Krall polynomials as eigenfunctions.

\section{Application of the GKN-EM Theorem to the Krall Differential
Expression\label{Section Five}}

Full details and proofs of results given below can be found in the recent
Ph.D. thesis of Elliott \cite{Elliott}.

For the Krall example, the base space is $H=L^{2}(-1,1)$ equipped with inner
product%
\[
\left\langle f,g\right\rangle _{H}:=\int_{-1}^{1}f(x)\overline{g}%
(x)dx\quad(f,g\in L^{2}(-1,1)).
\]
The maximal operator $T_{1}$ and the minimal operator $T_{0}$ in $H$
associated with $\ell_{K}[\cdot]$ are defined, respectively, by
\begin{align*}
(T_{1}f)(x)  &  =\ell_{K}[f](x)\quad(\text{a.e}.x\in(-1,1))\\
f\in\mathcal{D}(T_{1})  &  =\Delta_{K},
\end{align*}
where $\Delta_{K}$ is given in (\ref{II-1}), and
\begin{align*}
(T_{0}f)(x)  &  =\ell_{K}[f](x)\quad(\text{a.e. }x\in(-1,1))\\
f\in\mathcal{D}(T_{0})  &  =\{f\in\Delta_{K}|\left[  f,g\right]  _{H}%
=0\;(g\in\Delta_{K})\}.
\end{align*}
We remind the reader that, for $f,g\in\Delta_{K}$, we have
\begin{align}
\left[  f,g\right]  _{H}  &  :=\left\langle T_{1}f,g\right\rangle
_{H}-\left\langle f,T_{1}g\right\rangle _{H}\label{sesquilinear form in H}\\
&  =[f,g]_{K}(1)-[f,g]_{K}(-1),\nonumber
\end{align}
where $[\cdot,\cdot]_{K}$ is the symplectic form defined in (\ref{II-2}). As
is well known, $T_{0}$ is closed and symmetric in $L^{2}(-1,1)$ with
$T_{0}^{\ast}=T_{1}$ and $T_{1}^{\ast}=T_{0}$. The deficiency indices of
$T_{0}$ are equal and finite and, as the Frobenius analysis from Section
\ref{Section Two} shows,%
\begin{equation}
\mathrm{def}(T_{0})=4. \label{def(T_0)}%
\end{equation}

\noindent For the Krall example, the extension space is $W=\mathbb{C}^{2}$
with inner product
\begin{equation}
\left\langle (a,b),(a^{\prime},b^{\prime})\right\rangle _{W}:=\frac
{a\overline{a}^{\prime}}{A}+\frac{b\overline{b}^{\prime}}{B}. \label{IP in W}%
\end{equation}
Define $\xi_{1}=(\sqrt{A},0)$ and $\xi_{2}=(0,\sqrt{B})$. Then $\{\xi_{1}%
,\xi_{2}\}$ is an orthonormal basis for $W$. For the self-adjoint operator
$\mathcal{B}:W\rightarrow W,$ we choose $\mathcal{B}=0$ (see Remark
(\ref{B=0}) below for an explanation).

The inner product (\ref{IV-3}) in the extended space $H\oplus W$ is
specifically given by
\begin{align}
\left\langle (f,(a_{1},b_{1})),(g,(a_{2},b_{2}))\right\rangle _{H\oplus W}  &
=\langle f,g\rangle_{H}+\langle(a_{1},b_{1}),(a_{2},b_{2})\rangle
_{W}\label{sesquiliner form in H+W}\\
&  =\frac{a_{1}\overline{a}_{2}}{A}+\int_{-1}^{1}f(x)\overline{g}%
(x)dx+\frac{b_{1}\overline{b}_{2}}{B};\nonumber
\end{align}
compare (\ref{sesquiliner form in H+W}) to the inner product in (\ref{I-7}).
For ease of eye, we adopt the notation $(f,a,b)=(f,(a,b))$ to represent a
vector from $H\oplus W.$

Define $P=\{t_{1},t_{2}\}\subset C^{6}[-1,1]\cap\Delta_{K}$ by
\begin{equation}
t_{1}(x)=%
\begin{cases}
\sqrt{A} & x\text{ near }-1\\
0 & x\text{ near }1
\end{cases}
,\text{ }t_{2}(x)=%
\begin{cases}
0 & x\text{ near }-1\\
\sqrt{B} & x\text{ near }1.
\end{cases}
\label{partial gkn set}%
\end{equation}
We leave it to the reader to prove the following lemma.

\begin{lemma}
$P$ is a partial GKN set for $T_{0}$.
\end{lemma}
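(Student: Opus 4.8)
The plan is to verify directly the two defining properties of a GKN set for the pair $P=\{t_1,t_2\}$: that $t_1,t_2\in\mathcal{D}(T_1)=\Delta_K$ are linearly independent modulo the minimal domain $\mathcal{D}(T_0)$, and that they satisfy the symmetry conditions $[t_i,t_j]_H=0$ for $i,j\in\{1,2\}$. Since $\#P=2<4=\mathrm{def}(T_0)$, the set $P$ is then a non-empty proper subset of a full GKN set, hence a partial GKN set. Membership $t_1,t_2\in\Delta_K$ is immediate from $P\subset C^6[-1,1]\cap\Delta_K$ as recorded in (\ref{partial gkn set}), and by Theorem~\ref{Properties of Maximal Domain}(i) the endpoint values $t_i(\pm1)$ are well defined.

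For the symmetry conditions I would exploit the fact that each $t_i$ is \emph{locally constant} near each endpoint: $t_1$ equals the constant $\sqrt{A}$ on a neighborhood of $-1$ and vanishes on a neighborhood of $1$, while $t_2$ vanishes near $-1$ and equals $\sqrt{B}$ near $1$. Inspecting the bilinear concomitant (\ref{II-2}), every one of its terms carries at least one derivative of $f$ or of $\overline{g}$; hence $[f,g]_K(x)$ vanishes identically on any interval where both $f$ and $g$ are constant (all derivative factors being zero there) and likewise on any interval where either argument is identically zero. Consequently $[t_i,t_j]_K(\pm1)=0$ for all $i,j$, and by Green's formula (\ref{sesquilinear form in H}) one gets $[t_i,t_j]_H=[t_i,t_j]_K(1)-[t_i,t_j]_K(-1)=0$. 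This disposes of condition (ii) with essentially no computation.

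The linear independence modulo $\mathcal{D}(T_0)$ is the only step requiring thought. Suppose $f:=\alpha_1 t_1+\alpha_2 t_2\in\mathcal{D}(T_0)$, so that $[f,g]_H=0$ for every $g\in\Delta_K$; the task is to separate the endpoint data $f(1)=\alpha_2\sqrt{B}$ and $f(-1)=\alpha_1\sqrt{A}$. The main obstacle is precisely this separation: testing against $(1-x^2)^2$ directly yields, by Theorem~\ref{Properties of Maximal Domain}(v), only the symmetric combination proportional to $f(1)+f(-1)$, which degenerates when $A=B$. I would circumvent this by introducing one-sided functions $g_\pm\in C^6[-1,1]\cap\Delta_K$ with $g_+$ equal to $(1-x^2)^2$ near $1$ and to $0$ near $-1$, and $g_-$ equal to $(1-x^2)^2$ near $-1$ and to $0$ near $1$. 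Because the concomitant depends only on local endpoint data, Theorem~\ref{Properties of Maximal Domain}(v) gives $[f,g_+]_H=192f(1)$ and $[f,g_-]_H=192f(-1)$; setting these to zero and using $A,B>0$ forces $\alpha_2\sqrt{B}=0$ and $\alpha_1\sqrt{A}=0$, whence $\alpha_1=\alpha_2=0$.

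Finally, being symmetric and linearly independent modulo $\mathcal{D}(T_0)$ with $\#P=2<\mathrm{def}(T_0)=4$, the set $P$ extends to a full GKN set—for instance to $\{t_1,t_2,\psi_+,\psi_-\}$, whose symmetry and independence underlie the self-adjointness of $S_{A,B}$ invoked in the proof of Theorem~\ref{T_A,B is self-adjoint}—so $P$ is a partial GKN set. The residual points needing care are only that the auxiliary functions $g_\pm$ genuinely lie in $\Delta_K$ and that the concomitant is truly local; both follow at once from the smoothness of $g_\pm$ on the closed interval and from the pointwise form of (\ref{II-2}).
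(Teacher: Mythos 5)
Your proof is correct. The paper explicitly leaves this lemma to the reader, so there is no internal proof to compare against; but your argument uses exactly the toolkit the paper deploys for the analogous Lemma \ref{GKN Set for T_0 hat}. Your symmetry step is sound: every term of the concomitant (\ref{II-2}) carries at least a first derivative of $f$ or of $\overline{g}$, so $[t_i,t_j]_K$ vanishes identically near both endpoints, where each $t_i$ is locally constant, giving $[t_i,t_j]_H=0$ via (\ref{sesquilinear form in H}). Your auxiliary functions $g_\pm$ are precisely the paper's $y_1$ and $y_2$ from (\ref{y_j's}), and your values $[f,g_+]_H=192f(1)$ and $[f,g_-]_H=192f(-1)$ agree with (\ref{[f,y_j]_H}); testing the hypothetical relation $\alpha_1t_1+\alpha_2t_2\in\mathcal{D}(T_0)$ against them correctly forces $\alpha_1\sqrt{A}=\alpha_2\sqrt{B}=0$, using the paper's characterization $\mathcal{D}(T_0)=\{f\in\Delta_K\mid[f,g]_H=0\ (g\in\Delta_K)\}$ and Theorem \ref{Properties of Maximal Domain}(v) together with locality of the concomitant.

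Two refinements are worth recording. First, since the paper defines a partial GKN set as a proper subset of a \emph{full} GKN set, the extension step you include at the end is not optional, and you handle it correctly: your candidate $\{t_1,t_2,\psi_+,\psi_-\}$ works because $t_1=\sqrt{A}\,1_-$ and $t_2=\sqrt{B}\,1_+$ modulo $\mathcal{D}(T_0)$ (their differences are smooth and vanish near $\pm1$, hence have vanishing concomitant limits against every $g\in\Delta_K$ and so lie in the minimal domain), and both the symmetry conditions and independence modulo $\mathcal{D}(T_0)$ depend only on the span modulo the minimal domain; thus the GKN property asserted for $\{\psi_\pm,1_\pm\}$ in the proof of Theorem \ref{T_A,B is self-adjoint} transfers to your set. (Alternatively, the abstract symplectic fact that an isotropic subspace of the quotient $\mathcal{D}(T_1)/\mathcal{D}(T_0)$ extends to a Lagrangian one yields the extension without naming it, which is the general principle behind the Everitt--Markus framework.) Second, your aside that testing against $(1-x^2)^2$ directly ``degenerates when $A=B$'' is slightly off the mark: the single resulting equation $192\left(f(1)+f(-1)\right)=0$ is insufficient for \emph{all} positive $A,B$, simply because one equation cannot determine two coefficients; the one-sided cutoffs, not the relative sizes of $A$ and $B$, are what rescue the argument.
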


In line with the notation and theory from Section \ref{Section Four}, we
define
\[
\Phi_{0}:=\mathcal{D}(T_{0})+\mathrm{span}\{t_{1},t_{2}\},
\]
and the operators, $\Psi:\Phi_{0}\rightarrow W$ by
\[
\Psi(f_{0}+\alpha_{1}t_{1}+\alpha_{2}t_{2}):=\alpha_{1}\xi_{1}+\alpha_{2}%
\xi_{2}=\left(  \alpha_{1}\sqrt{A},\alpha_{2}\sqrt{B}\right)  \quad(f_{0}%
\in\mathcal{D}(T_{0})),
\]
and $\Omega:\Delta_{K}\rightarrow W$ by
\begin{align}
\Omega f  &  :=[f,t_{1}]_{H}\xi_{1}+[f,t_{2}]_{H}\xi_{2}\label{Omega[f]}\\
&  =-\sqrt{A}[f,1]_{K}(-1)\xi_{1}+\sqrt{B}[f,1]_{K}(1)\xi_{2}=\left(
-A[f,1]_{K}(-1),B[f,1]_{K}(1)\right)  ,\nonumber
\end{align}
where $[f,g]_{K}(\pm1)$ are defined in (\ref{Limit of [f,g] at 1 and -1}).

With $\mathcal{B}=0$, the minimal operator $\widehat{T}_{0}:\mathcal{D(}%
\widehat{T}_{0})\subseteq H\oplus W\rightarrow H\oplus W$ is given by
\begin{align*}
\widehat{T}_{0}\left(  f,(a,b)\right)   &  =\left(  T_{1}f,0,0\right) \\
\mathcal{D(}\widehat{T}_{0})  &  =\left\{  (f,\Psi f)\mid f\in\Phi
_{0}\right\}  .
\end{align*}
We note, from (\ref{def(T_0)}) and Theorem \ref{Minimal Operator T_0 hat},
that $\widehat{T}_{0}$ is a closed symmetric operator in $H\oplus W$ with
$\mathrm{def}(\widehat{T}_{0})=4.$

The associated maximal operator $\widehat{T}_{1}$ in $H\oplus W$ is defined
by
\begin{align*}
\widehat{T}_{1}\left(  f,a,b\right)   &  =\left(  T_{1}f,-\Omega f\right)
=\left(  T_{1}f,A[f,1]_{K}(-1),-B[f,1]_{K}(1)\right) \\
\mathcal{D(}\widehat{T}_{1})  &  =\{\left(  f,a,b\right)  \mid f\in\Delta
_{K},(a,b)\in W\}.
\end{align*}

The symplectic form $[\cdot,\cdot]_{H\oplus W}$, defined in (\ref{IV-3}), is
given by
\[
\lbrack(f,a_{1},b_{1}),(g,a_{2},b_{2})]_{H\oplus W}=[f,g]_{H}-\left\langle
\Omega f,(a_{2},b_{2})\right\rangle _{W}+\left\langle (a_{1},b_{1}),\Omega
g\right\rangle _{W},
\]
for $(f,a_{1},b_{1}),(g,a_{2},b_{2})\in\mathcal{D(}\widehat{T}_{1})$.

We now construct a GKN set for $\widehat{T}_{0}$ in $H\oplus W.$ Define the
functions $y_{1},y_{2},y_{3},y_{4}\in C^{6}[-1,1]\cap\Delta_{K}$ by
\begin{align}
y_{1}(x)  &  =%
\begin{cases}
0 & x\text{ near }-1\\
(1-x^{2})^{2} & x\text{ near }1,
\end{cases}
\quad y_{2}(x)=%
\begin{cases}
(1-x^{2})^{2} & x\text{ near }-1\\
0 & x\text{ near }1,
\end{cases}
\label{y_j's}\\
y_{3}(x)  &  =%
\begin{cases}
0 & x\text{ near }-1\\
1-x^{2} & x\text{ near }1,
\end{cases}
\text{ }y_{4}(x)=%
\begin{cases}
1-x^{2} & x\text{ near }-1\\
0 & x\text{ near }1.
\end{cases}
\quad\nonumber
\end{align}
Calculations, using Theorem \ref{Properties of Maximal Domain}, show that, for
$f\in\Delta_{K}$,
\begin{align}
\lbrack f,y_{1}]_{H}  &  =[f,(1-x^{2})^{2}]_{K}(1)=192f(1),\nonumber\\
\lbrack f,y_{2}]_{H}  &  =-[f,(1-x^{2})^{2}]_{K}(-1)=192f(-1),\nonumber\\
\lbrack f,y_{3}]_{H}  &  =[f,1-x^{2}]_{K}(1)=2\Lambda\lbrack
f](1)-48(A+2)f(1),\label{[f,y_j]_H}\\
\lbrack f,y_{4}]_{H}  &  =-[f,1-x^{2}]_{K}(-1)=2\Lambda\lbrack
f](-1)-48(B+2)f(-1).\nonumber
\end{align}
The importance of these functions $\{y_{i}\}_{i=0}^{4}$ lies in the following result.

\begin{lemma}
\label{GKN Set for T_0 hat}$\{y_{i},(0,0))\}_{i=1}^{4}$ is a GKN set for
$\widehat{T}_{0}$ in $H\oplus W$.
\end{lemma}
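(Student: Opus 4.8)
To show $\{(y_i,(0,0))\}_{i=1}^4$ is a GKN set for $\widehat T_0$ in $H\oplus W$, I must verify the two defining conditions from Section \ref{Section Four}: (i) linear independence modulo $\mathcal D(\widehat T_0)$, and (ii) the symmetry conditions $[(y_i,0,0),(y_j,0,0)]_{H\oplus W}=0$ for all $i,j=1,\dots,4$. Since $\mathrm{def}(\widehat T_0)=\mathrm{def}(T_0)=4$ by Theorem \ref{Minimal Operator T_0 hat}(b), exactly four vectors are required, and I have four, so the counting is correct.

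**Plan.** The plan is to dispatch the symmetry condition first, since it reduces to a clean computation. The symplectic form on $H\oplus W$ specializes, for vectors of the form $(y_i,0,0)$, to $[(y_i,0,0),(y_j,0,0)]_{H\oplus W}=[y_i,y_j]_H-\langle\Omega y_i,(0,0)\rangle_W+\langle(0,0),\Omega y_j\rangle_W=[y_i,y_j]_H$, so the two $W$-terms vanish identically because the second components are zero. Thus condition (ii) collapses to showing $[y_i,y_j]_H=0$ for all $i,j$ in the base space $H=L^2(-1,1)$. First I would use the support structure of the $y_i$: each is supported near exactly one endpoint, so by \eqref{sesquilinear form in H}, $[y_i,y_j]_H=[y_i,y_j]_K(1)-[y_i,y_j]_K(-1)$ picks up a contribution only at the shared endpoint. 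For a pair supported at opposite endpoints (e.g. $y_1,y_2$), both boundary terms vanish and $[y_i,y_j]_H=0$ immediately. For a pair supported at the same endpoint, I would invoke Theorem \ref{Properties of Maximal Domain}(iv) and (v): since each $y_i$ equals $1-x^2$ or $(1-x^2)^2$ near its endpoint, the bracket $[y_i,y_j]_K$ is a limit of brackets among these explicit functions, which one checks vanishes there.

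**The main obstacle.** The harder half is condition (i), linear independence modulo $\mathcal D(\widehat T_0)$. Here I would argue by contradiction: suppose $\sum_{i=1}^4 c_i\,(y_i,(0,0))\in\mathcal D(\widehat T_0)=\{(f,\Psi f)\mid f\in\Phi_0\}$. The second component of the left side is $(0,0)$, forcing $\Psi f=(0,0)$ where $f=\sum c_i y_i$; since $\Psi$ records the $t_1,t_2$-coefficients and the $y_i$ carry none (they are linear combinations of $1-x^2$ and $(1-x^2)^2$, not of $t_1,t_2$), this forces $f\in\mathcal D(T_0)$. But $f\in\mathcal D(T_0)$ means $[f,g]_H=0$ for all $g\in\Delta_K$; testing against suitable $g$ and using the pairings \eqref{[f,y_j]_H}, I can extract the boundary data $f(\pm1)$ and $\Lambda[f](\pm1)$ of $f=\sum c_i y_i$. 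Evaluating \eqref{[f,y_j]_H} on $f=\sum c_i y_i$ — using Theorem \ref{Properties of little delta_k} or direct computation of the values $y_i(\pm1)$, $\Lambda[y_i](\pm1)$ — produces four linear relations in $c_1,\dots,c_4$ whose coefficient matrix I expect to be nonsingular (the $192f(\pm1)$ and $2\Lambda[f](\pm1)-48(A{+}2)f(1)$ etc. separate the four degrees of freedom). Concluding $c_1=\cdots=c_4=0$ finishes the argument.

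**Anticipated difficulty.** I expect the genuine work to lie in confirming nonsingularity of that $4\times4$ system and, subtly, in pinning down \emph{which} pairings characterize membership in $\mathcal D(T_0)$: the implication $[f,g]_H=0$ for all $g\in\Delta_K$ must be reduced to vanishing against the four test vectors $y_1,\dots,y_4$, which is exactly where the equalities in \eqref{[f,y_j]_H} do the heavy lifting. The symmetry condition (ii) is routine once the endpoint-support bookkeeping is in place; the independence condition (i) is the crux and hinges on the boundary values of $1-x^2$ and $(1-x^2)^2$ (and their $\Lambda$-images) being linearly independent, which the coefficients $192$, $-48(A+2)$, $-48(B+2)$ in \eqref{[f,y_j]_H} make transparent.
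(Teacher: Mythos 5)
Your treatment of the symmetry condition (ii) is correct and is essentially the paper's: for vectors with second component $(0,0)$ the form collapses to $[y_i,y_j]_H$, and these brackets vanish by Theorem \ref{Properties of Maximal Domain} (iv), (v) together with the endpoint-support bookkeeping. Your reduction of condition (i) is also sound in its conclusion: if $\sum_i c_i(y_i,(0,0))\in\mathcal{D}(\widehat{T}_0)$, then the second component forces $\Psi\bigl(\sum_i c_iy_i\bigr)=(0,0)$, hence $\sum_i c_iy_i\in\mathcal{D}(T_0)$ --- though the correct reason is that $t_1,t_2$ are linearly independent modulo $\mathcal{D}(T_0)$ (so the coefficients in the decomposition of $\Phi_0$ are unique and must vanish), not your parenthetical remark that the $y_i$ ``carry no $t_j$,'' which concerns only how the $y_i$ are written, not their class modulo $\mathcal{D}(T_0)$.

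The genuine gap is in your proposed $4\times4$ system. You plan to evaluate the pairings $(\ref{[f,y_j]_H})$ on $f=\sum_i c_iy_i$, so that the coefficient matrix is built from the boundary data $y_i(\pm1)$ and $\Lambda[y_i](\pm1)$. But this data is \emph{identically zero}: each $y_i$ vanishes at both endpoints, and $\Lambda[g](\pm1)=0$ for every $g\in C^{4}[-1,1]$, since every term of $\Lambda[g](x)=-\left((1-x^2)^3g'''(x)\right)'+(1-x^2)(12+\alpha(1-x^2))g''(x)$ carries a factor of $1-x^2$. Thus your four relations read $0=0$ for all $c_1,\dots,c_4$; indeed $[\sum_i c_iy_i,y_j]_H=0$ is precisely the symmetry condition you verified in part (ii), so the same pairings cannot also detect independence --- the matrix you ``expect to be nonsingular'' is the zero matrix. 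The missing idea, which is the crux of the paper's proof, is to test $\sum_i c_iy_i$ against four \emph{other} maximal-domain functions with nontrivial boundary data: $f_1,f_2$ (equal to $1$ near one endpoint), yielding $192\overline{c}_2-48(B+2)\overline{c}_4$ and $192\overline{c}_1-48(A+2)\overline{c}_3$ via $(\ref{GKN calculation})$, and, crucially, the logarithmic functions $h_\pm$ of Theorem \ref{Properties of Maximal Domain} (vi), for which $\Lambda[h_\pm](\pm1)=24\neq0$, yielding $48\overline{c}_3$ and $48\overline{c}_4$. No smooth test function can play the role of $h_\pm$, by the vanishing fact above --- this is exactly why the paper introduces $h_\pm$ and flags their importance in the remark following Theorem \ref{Properties of Maximal Domain}. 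With these four tests the system is triangular and forces $c_3=c_4=0$, then $c_1=c_2=0$, completing the independence argument.
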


\begin{proof}
It will first be shown that $\{y_{i},(0,0))\}_{i=1}^{4}$ is linearly
independent modulo $\mathcal{D(}\widehat{T}_{0})$. Since
\[
\lbrack(y_{i},(0,0)),(y_{j},(0,0))]_{H\oplus W}=[y_{i},y_{j}]_{H}%
\]
for $i,j=1,2,3,4$, it is sufficient to show that $\{y_{i}\}_{i=1}^{4}$ is
linearly independent modulo $\mathcal{D}(T_{0})$. To this end, let $f\in
\Delta_{K}$ and arbitrarily choose $c_{1},c_{2},c_{3},c_{4}\in\mathbb{C}$.
Then, from Theorem \ref{Properties of Maximal Domain} (iv) and (v), we find that%

\begin{align}
\lbrack f,c_{1}y_{1}+c_{2}y_{2}+c_{3}y_{3}+c_{4}y_{4}]_{H}=  &  \overline
{c}_{1}[f,y_{1}]_{H}+\overline{c}_{2}[f,y_{2}]_{H}+\overline{c}_{3}%
[f,y_{3}]_{H}+\overline{c}_{4}[f,y_{4}]_{H}\nonumber\\
=  &  192\overline{c}_{1}f(1)+192\overline{c}_{2}f(-1)\nonumber\\
&  +\overline{c}_{3}\left(  2\Lambda\lbrack f](1)-48(A+2)f(1)\right)
\label{GKN calculation}\\
&  +\overline{c}_{4}\left(  2\Lambda\lbrack f](-1)-48(B+2)f(-1)\right)
.\nonumber
\end{align}

\noindent We now specifically choose various $f\in\Delta_{K}.$ Define
$f_{1},f_{2}\in C^{6}[-1,1]\cap\Delta_{K}$ by
\[
f_{1}(x):=%
\begin{cases}
1 & x\text{ near }-1\\
0 & x\text{ near }1,
\end{cases}
,\quad f_{2}(x):=%
\begin{cases}
0 & x\text{ near }-1\\
1 & x\text{ near }1.
\end{cases}
\]

\noindent Then
\begin{equation}
\lbrack f_{1},c_{1}y_{1}+c_{2}y_{2}+c_{3}y_{3}+c_{4}y_{4}]_{H}=192\overline
{c}_{2}-48(B+2)\overline{c}_{4} \label{f_1 calculation}%
\end{equation}
and%
\begin{equation}
\lbrack f_{2},c_{1}y_{1}+c_{2}y_{2}+c_{3}y_{3}+c_{4}y_{4}]_{H}=192\overline
{c}_{1}-48(A+2)\overline{c}_{3}.\newline\label{f_2 calculation}%
\end{equation}

Using the functions $h_{\pm}(x)$ defined in Theorem
\ref{Properties of Maximal Domain} (vi), some tedious calculations show that
$\Lambda\lbrack h_{+}](1)=24$ and
\begin{equation}
\lbrack h_{+},c_{1}y_{1}+c_{2}y_{2}+c_{3}y_{3}+c_{4}y_{4}]_{H}=48\overline
{c}_{3}; \label{h_+ calculation}%
\end{equation}

\noindent\noindent similarly, $\Lambda\lbrack h_{-}](-1)=24$ and
\begin{equation}
\lbrack h_{-},c_{1}y_{1}+c_{2}y_{2}+c_{3}y_{3}+c_{4}y_{4}]_{H}=48\overline
{c}_{4}. \label{h_- calculation}%
\end{equation}
From the identities in (\ref{f_1 calculation}), (\ref{f_2 calculation}),
(\ref{h_+ calculation}), and (\ref{h_- calculation}), we immediately see that
the simultaneous solution of the system of equations
\begin{align*}
\lbrack f_{1},c_{1}y_{1}+c_{2}y_{2}+c_{3}y_{3}+c_{4}y_{4}]_{H}  &  =0,\\
\lbrack f_{2},c_{1}y_{1}+c_{2}y_{2}+c_{3}y_{3}+c_{4}y_{4}]_{H}  &  =0,\\
\lbrack h_{+},c_{1}y_{1}+c_{2}y_{2}+c_{3}y_{3}+c_{4}y_{4}]_{H}  &  =0,\\
\lbrack h_{-},c_{1}y_{1}+c_{2}y_{2}+c_{3}y_{3}+c_{4}y_{4}]_{H}  &  =0
\end{align*}
is $\overline{c}_{1}=\overline{c}_{2}=\overline{c}_{3}=\overline{c}_{4}=0$.
Therefore, $\{y_{i}\}_{i=1}^{4}$ is linearly independent modulo $\mathcal{D}%
(T_{0})$ and, hence, $\{y_{i},(0,0))\}_{i=1}^{4}$ is linearly independent
modulo $\mathcal{D(}\widehat{T}_{0})$.

We now show that $[(y_{i},(0,0)),(y_{j},(0,0))]_{H\oplus W}=[y_{i},y_{j}%
]_{H}=0$ for $i,j=1,2,3,4$. Straightforward calculations show that
\begin{align*}
\lbrack y_{1},y_{1}]_{H}  &  =192y_{1}(1)=0,\quad\lbrack y_{1},y_{2}%
]_{H}=192y_{1}(-1)=0,\\
\lbrack y_{1},y_{3}]_{H}  &  =-192y_{3}(1)=0,\quad\lbrack y_{1},y_{4}%
]_{H}=-192y_{4}(1)=0,\\
\lbrack y_{2},y_{2}]_{H}  &  =192y_{2}(-1)=0,\quad\lbrack y_{2},y_{3}%
]_{H}=-192y_{3}(-1)=0,\\
\lbrack y_{2},y_{4}]_{H}  &  =-192y_{4}(-1)=0,\quad\lbrack y_{3},y_{3}%
]_{H}=2\Lambda\lbrack y_{3}](1)-48(A+2)y_{3}(1)=0,\\
\lbrack y_{3},y_{4}]_{H}  &  =2\Lambda\lbrack y_{3}](-1)-48(B+2)y_{3}%
(-1)=0,\quad\lbrack y_{4},y_{4}]_{H}=2\Lambda\lbrack y_{4}](-1)-48(B+2)y_{4}%
(-1)=0.
\end{align*}
and since $[y_{i},y_{j}]_{H}=-[y_{j},y_{i}]_{H}$, we conclude that
$\{(y_{i},(0,0))\}_{i=1}^{4}$ is a GKN set for $\widehat{T}_{0}$.
\end{proof}

It follows, from Theorem \ref{GKNEM}, that the operator $\widehat{T}%
_{A,B}:\mathcal{D}(\widehat{T}_{A,B})\subset L^{2}(-1,1)\oplus\mathbb{C}%
^{2}\rightarrow$ $L^{2}(-1,1)\oplus\mathbb{C}^{2}$ defined by%
\begin{align}
\widehat{T}_{A,B}[(f,a,b)](x) &  =(\ell_{K}[f](x),-\Omega\lbrack
f])\quad(\text{a.e. }x\in(-1,1))\label{Form of T_A,B hat}\\
\mathcal{D}(\widehat{T}_{A,B}) &  :=\{(f,a,b)\in\mathcal{D}(\widehat{T}%
_{1})\mid\lbrack(f,(a,b)),(y_{j},(0,0))]_{H\oplus W}%
=0\;(j,1,2,3,4)\}\label{D(T_A,B hat)}%
\end{align}
is self-adjoint in $L^{2}(-1,1)\oplus\mathbb{C}^{2}.$ For the remainder of
this section, we will prove this operator is equivalent to the self-adjoint
operator found by Loveland et al in \cite{Everitt-Littlejohn-Loveland} and
\cite{Loveland} and to show that the Krall polynomials $\{K_{n,A,B}%
\}_{n=0}^{\infty}$ are a complete set of eigenfunctions of $\widehat{T}%
_{A,B}.$ To prove this, we begin with the following fundamental theorem.

\begin{theorem}
\label{GKNEM domain is little delta}With $\{y_{i}\}_{i=1}^{4}$ defined in
$($\ref{y_j's}$)$ and $\mathcal{D}(\widehat{T}_{A,B})$ as given in
$($\ref{D(T_A,B hat)}$),$ we have
\begin{equation}
\mathcal{D}(\widehat{T}_{A,B})=\{(f,f(-1),f(1))\mid f\in\delta_{K}%
\},\label{Domain of T_A,B}%
\end{equation}
where $\delta_{K}$ is defined in $($\ref{little delta_k}$).$ Moreover, for
$f\in\mathcal{D}(\widehat{T}_{A,B}),$
\begin{align}
&  \widehat{T}_{A,B}[(f,f(-1),f(1))](x)\label{Explicit Form of T_A,B}\\
&  =(\ell_{K}[f](x),24Af^{\prime\prime}(-1)-24A(B+1)f^{\prime}%
(-1),24Bf^{\prime\prime}(1)+24B(A+1)f^{\prime}(1)).\nonumber
\end{align}

\begin{proof}
Suppose $f\in\Delta_{K}$ and $(a,b)\in\mathbb{C}^{2}$ satisfy, for
$j=1,2,3,4,$%
\begin{align}
0  &  =[(f,(a,b)),(y_{j},(0,0))]_{H\oplus W}\label{SA BC's}\\
&  =[f,y_{j}]_{H}+\langle(a,b),\Omega y_{j}\rangle_{W}.\nonumber
\end{align}
Calculations show that%
\begin{align}
\Omega y_{1}  &  =(0,-192B),\quad\Omega y_{2}=(-192A,0),\nonumber\\
\Omega y_{3}  &  =(0,48B(A+2)),\quad\Omega y_{4}=(48A(B+2),0).
\label{Omega[y_j]}%
\end{align}
Together with the identities in $($\ref{[f,y_j]_H}$),$ it follows that the
equations in $($\ref{SA BC's}$)$ are equivalent to%
\begin{equation}%
\begin{tabular}
[c]{l}%
$(\text{i})\text{ }192f(1)-192b=0,$\\
$(\text{ii})\text{ }192f(-1)-192a=0,$\\
$(\text{iii})\text{ }2\Lambda\lbrack f](1)-48(A+2)f(1)+48b(A+2)=0,$\\
$(\text{iv})\text{ }2\Lambda\lbrack f](-1)-48(B+2)f(-1)+48a(B+2)=0.$%
\end{tabular}
\ \ \ \ \ \label{Equivalent conditions}%
\end{equation}
From $($i$)$ and $($ii$)$ above, we find that $f(1)=b$ and $f(-1)=a.$
Substituting these values into $($iii$)$ and $($iv$),$ we find that%
\begin{equation}
\Lambda\lbrack f](-1)=\Lambda\lbrack f](1)=0.
\label{Capital Lambda conditions}%
\end{equation}
From $($\ref{GKN BC's}$)$ and $($\ref{little delta_k}$),$ we see that
$f\in\delta_{K}$. It follows that
\begin{align}
\{(f,(a,b))  &  \in\mathcal{D}(\widehat{T}_{1})\mid\lbrack(f,(a,b)),(y_{j}%
,(0,0))]_{H\oplus W}=0\;(j,1,2,3,4)\}\label{Inclusion 1}\\
&  \subseteq\{(f,f(-1),f(1))\mid f\in\delta_{K}\}.\nonumber
\end{align}
The reverse inclusion follows from the conditions given in $($%
\ref{Equivalent conditions}$).$ As for the form of $\widehat{T}_{A,B}$ note
that, from Theorem \ref{GKNEM}, the form of $\widehat{T}_{A,B}$ is given in
$($\ref{Form of T_A,B hat}$).$ Using $($\ref{Omega[f]}$)$ together with
$($\ref{[f,1](1)}$)$ and $($\ref{[f,1](-1)}$),$ we obtain the representation
given in $($\ref{Explicit Form of T_A,B}$).$
\end{proof}
\end{theorem}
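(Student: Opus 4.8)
The plan is to establish the set equality \eqref{Domain of T_A,B} by a double-inclusion argument, where the essential computational engine is translating the four abstract symplectic conditions in \eqref{D(T_A,B hat)} into concrete boundary conditions on $f$. First I would take an arbitrary $(f,(a,b)) \in \mathcal{D}(\widehat{T}_1)$ and expand each condition $[(f,(a,b)),(y_j,(0,0))]_{H\oplus W}=0$ using the explicit formula \eqref{IV-3} for the symplectic form in $H\oplus W$, which reduces it to $[f,y_j]_H + \langle (a,b),\Omega y_j\rangle_W = 0$ (the $\langle \Omega f, (0,0)\rangle_W$ term vanishes since the $W$-component of each $y_j$ is zero). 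The two inputs I need here are the values $[f,y_j]_H$ from \eqref{[f,y_j]_H}, obtained from Theorem~\ref{Properties of Maximal Domain}(iv)--(v), and the values $\Omega y_j$ recorded in \eqref{Omega[y_j]}, which come from applying \eqref{Omega[f]} together with parts (iv) and (v) again. Substituting these yields the four scalar equations \eqref{Equivalent conditions}.

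Next I would solve this system. Conditions (i) and (ii) immediately force $b = f(1)$ and $a = f(-1)$, pinning down the $\mathbb{C}^2$-component of the vector. Feeding these back into (iii) and (iv), the $f(1)$ and $f(-1)$ terms cancel against the $b$ and $a$ terms, collapsing the equations to $\Lambda[f](1) = \Lambda[f](-1) = 0$. By Theorem~\ref{Boundary Conditions and Lambda} we have $[f,\psi_\pm]_K(\pm1) = \Lambda[f](\pm1)$, so these are exactly the defining conditions $[f,\psi_+](1) = [f,\psi_-](-1) = 0$ of $\delta_K$ in \eqref{little delta_k}. This establishes the forward inclusion \eqref{Inclusion 1}. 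The reverse inclusion is symmetric: given $f \in \delta_K$, set $a = f(-1)$ and $b = f(1)$; then conditions (i)--(ii) hold by construction, and conditions (iii)--(iv) follow from $\Lambda[f](\pm1)=0$, so $(f,f(-1),f(1))$ satisfies all four symplectic constraints. Since every step is an equivalence, both inclusions hold and \eqref{Domain of T_A,B} is proved.

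For the explicit action \eqref{Explicit Form of T_A,B}, I would start from the abstract form $\widehat{T}_{A,B}[(f,a,b)] = (\ell_K[f], -\Omega f)$ supplied by \eqref{Form of T_A,B hat} via Theorem~\ref{GKNEM}. Unwinding $-\Omega f$ through \eqref{Omega[f]} gives the $\mathbb{C}^2$-component as $(A[f,1]_K(-1),\,-B[f,1]_K(1))$. Because the domain elements satisfy $f \in \delta_K$, I can now invoke the refined boundary evaluations \eqref{[f,1](1)} and \eqref{[f,1](-1)} from Theorem~\ref{Properties of little delta_k}(e), which express $[f,1]_K(\pm1)$ in terms of $f'(\pm1)$ and $f''(\pm1)$. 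Substituting these produces the two endpoint values $24Af''(-1) - 24A(B+1)f'(-1)$ and $24Bf''(1) + 24B(A+1)f'(1)$, matching \eqref{Explicit Form of T_A,B}.

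The main obstacle is not conceptual but bookkeeping: the whole proof hinges on having the correct constants in \eqref{[f,y_j]_H} and \eqref{Omega[y_j]}, and a sign error in any one of the four symplectic evaluations would derail the cancellation that turns (iii)--(iv) into the clean conditions $\Lambda[f](\pm1)=0$. The one genuinely subtle point is that the reduction $[f,1]_K(\pm1)$ to the $f',f''$ expressions requires membership in the \emph{smaller} space $\delta_K$ rather than merely $\Delta_K$ — the formulas in Theorem~\ref{Properties of little delta_k}(e) are only valid there — so I must be careful to apply them only after the domain identity has placed $f$ in $\delta_K$. Everything else is a direct substitution of already-established identities.
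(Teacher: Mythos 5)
Your proposal is correct and follows essentially the same route as the paper's own proof: the same expansion of the four symplectic conditions into the scalar system via \eqref{[f,y_j]_H} and \eqref{Omega[y_j]}, the same collapse of (iii)--(iv) to $\Lambda[f](\pm1)=0$ and identification with $\delta_{K}$ through Theorem \ref{Boundary Conditions and Lambda}, and the same substitution of \eqref{[f,1](1)} and \eqref{[f,1](-1)} into $-\Omega f$ to obtain \eqref{Explicit Form of T_A,B}. Your two added observations --- spelling out the reverse inclusion explicitly, and noting that Theorem \ref{Properties of little delta_k}(e) may only be invoked once $f$ is known to lie in $\delta_{K}$ --- are accurate refinements of steps the paper treats tersely.
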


\begin{remark}
Compare Theorem \ref{GKNEM domain is little delta} to Theorem
\ref{T_A,B is self-adjoint} to see that $T_{A,B}$, given in $($%
\ref{Definition of T_A,B}$),$ and $\widehat{T}_{A,B},$ given in Theorem
\ref{GKNEM domain is little delta}, are $($essentially$)$ the same.
\end{remark}

\begin{remark}
Note that, from $($i$)$ and $($ii$)$ in $($\ref{Equivalent conditions}$),$ two
of the GKN-EM boundary conditions, specifically
\[
\lbrack f,y_{i}]_{H}+\langle(a,b),\Omega y_{i}\rangle_{W}=0\quad(i=1,2),
\]
are equivalent to the conditions $a=f(-1)$ and $b=f(1).$ In other words, these
boundary conditions force continuity at the endpoints of $f\in\delta_{K}$. The
other two boundary conditions are, essentially, classical GKN boundary
conditions; indeed, once continuity at the endpoints is forced, these
remaining two conditions, namely
\[
\lbrack f,y_{j}]_{H}+\langle(a,b),\Omega y_{j}\rangle_{W}=0\quad(i=3,4)
\]
are equivalent to%
\[
\lbrack f,\psi_{+}]_{H}=[f,\psi_{-}]_{H}=0.
\]

\end{remark}

The final result that we establish shows that the Krall polynomials
$\{K_{n,A,B}\}_{n=0}^{\infty}$ are eigenfunctions of $\widehat{T}_{A,B}%
[\cdot]$ in the following sense.

\begin{theorem}
$\{(K_{n,A,B},$ $K_{n,A,B}(-1),$ $K_{n,A,B}(1)\}_{n=0}^{\infty}$ form a
complete set of eigenfunctions of $\widehat{T}_{A,B}$ in $L^{2}((-1,1)\oplus
\mathbb{C}^{2}.$

\begin{proof}
Clearly $\{K_{n,A,B}\}_{n=0}^{\infty}$ $\subset\Delta_{K}.$ Straightforward
calculations show that%
\[
\lbrack K_{n,A,B},\psi_{-1}]_{H}=-[K_{n,A,B},\psi_{-1}]_{K}(-1)=0
\]
and%
\[
\lbrack K_{n,A,B},\psi_{+1}]_{H}=[K_{n,A,B},\psi_{+1}]_{K}(1)=0
\]
so $\{K_{n,A,B}\}_{n=0}^{\infty}$ $\subset\delta_{K}.$ The Krall polynomials
are eigenfunctions of the Krall expression $\ell_{K}[\cdot]$; specifically,
from $($\ref{I-4}$),$ we see that
\[
\ell_{K}[K_{n,A,B}](x)=\lambda_{n}K_{n,A,B}(x)\quad(x\in\lbrack-1,1]),
\]
where the eigenvalue $\lambda_{n}$ is given in $($\ref{I-3}$).$ Moreover, we
can see directly from the Krall expression in $($\ref{I-1}$)$ that%
\[
\ell_{K}[K_{n,A,B}](-1)=24AK_{n,A,B}^{\prime\prime}(-1)-(24AB+24A)K_{n,A,B}%
^{\prime}(-1)=\lambda_{n}K_{n,A,B}(-1)
\]
and
\[
\ell_{K}[K_{n,A,B}](1)=24BK_{n,A,B}^{\prime\prime}(1)+(24AB+24B)K_{n,A,B}%
^{\prime}(1)=\lambda_{n}K_{n,A,B}(1).
\]
Then, from $($\ref{Explicit Form of T_A,B}$),$
\begin{align*}
&  \widehat{T}_{A,B}[(K_{n,A,B},K_{n,A,B}(-1),K_{n,A,B}(1))]\\
&  =(\ell_{K}[K_{n,A,B}],24AK_{n,A,B}^{\prime\prime}(-1)-24A(B+1)K_{n,A,B}%
^{\prime}(-1),24BK_{n,A,B}^{\prime\prime}(1)+24B(A+1)K_{n,A,B}^{\prime}(1))\\
&  =\left(  \ell_{K}[K_{n,A,B}],\ell_{K}[K_{n,A,B}](-1),\ell_{K}%
[K_{n,A,B}](1)\right) \\
&  =\lambda_{n}\left(  K_{n,A,B},K_{n,A,B}(-1),K_{n,A,B}(1)\right)  .
\end{align*}
The completeness of $\{(K_{n,A,B},$ $K_{n,A,B}(-1),$ $K_{n,A,B}(1)\}_{n=0}%
^{\infty}$ follows from the self-adjointness of $\widehat{T}_{A,B}$ and the
discreteness of its spectrum $\sigma(\widehat{T}_{A,B})=\{\lambda_{n}\mid
n\in\mathbb{N}_{0}\}.$
\end{proof}
\end{theorem}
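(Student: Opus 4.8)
The plan is to split the statement into two assertions: first, that each triple $(K_{n,A,B}, K_{n,A,B}(-1), K_{n,A,B}(1))$ is an eigenvector of $\widehat{T}_{A,B}$ with eigenvalue $\lambda_{n}$; and second, that the resulting collection is complete in $L^{2}(-1,1)\oplus\mathbb{C}^{2}$. For the eigenvector assertion I would first confirm that each triple lies in $\mathcal{D}(\widehat{T}_{A,B})$. By Theorem \ref{GKNEM domain is little delta} this domain is exactly $\{(f,f(-1),f(1))\mid f\in\delta_{K}\}$, so it suffices to show $K_{n,A,B}\in\delta_{K}$. Each $K_{n,A,B}$ is a polynomial and hence belongs to $\Delta_{K}$, and by the definition $(\ref{little delta_k})$ of $\delta_{K}$ together with Theorem \ref{Boundary Conditions and Lambda} the membership reduces to verifying $\Lambda[K_{n,A,B}](\pm1)=0$. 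This is immediate from the closed form of $\Lambda$ in $(\ref{II-4})$: every term carries a factor $(1-x^{2})$, so $\Lambda$ annihilates any polynomial at $x=\pm1$.

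Having placed the triples in the domain, I would compute the action of $\widehat{T}_{A,B}$ using its explicit form $(\ref{Explicit Form of T_A,B})$. The $L^{2}(-1,1)$-component is $\ell_{K}[K_{n,A,B}]=\lambda_{n}K_{n,A,B}$ directly from the eigenvalue equation $(\ref{I-4})$. For the two $\mathbb{C}^{2}$-components I would use the observation, obtained by setting $x=\pm1$ in $(\ref{I-1})$, that every term carrying a factor $(x^{2}-1)$ drops out at the endpoints, leaving
\begin{align*}
\ell_{K}[f](-1) &= 24Af''(-1)-24A(B+1)f'(-1),\\
\ell_{K}[f](1) &= 24Bf''(1)+24B(A+1)f'(1)
\end{align*}
for every $f\in C^{6}[-1,1]$. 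Applying this with $f=K_{n,A,B}$ and again invoking $(\ref{I-4})$ (which holds as a polynomial identity on all of $[-1,1]$, endpoints included) shows that the two boundary components of $(\ref{Explicit Form of T_A,B})$ are $\lambda_{n}K_{n,A,B}(-1)$ and $\lambda_{n}K_{n,A,B}(1)$. Collecting all three components gives $\widehat{T}_{A,B}[(K_{n,A,B},K_{n,A,B}(-1),K_{n,A,B}(1))]=\lambda_{n}(K_{n,A,B},K_{n,A,B}(-1),K_{n,A,B}(1))$, which is the eigenvector property.

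For completeness I would exploit the isometric isomorphism $U:L_{\kappa}^{2}[-1,1]\to L^{2}(-1,1)\oplus\mathbb{C}^{2}$, $Uf=(f,f(-1),f(1))$, under which the inner products $(\ref{I-7})$ and $(\ref{sesquiliner form in H+W})$ agree and under which $T_{A,B}$ is unitarily equivalent to $\widehat{T}_{A,B}$ (cf.\ the remark following Theorem \ref{GKNEM domain is little delta}). By Theorem \ref{T_A,B is self-adjoint} the Krall polynomials form a complete orthogonal set in $L_{\kappa}^{2}[-1,1]$, so their images $UK_{n,A,B}=(K_{n,A,B},K_{n,A,B}(-1),K_{n,A,B}(1))$ are complete in $L^{2}(-1,1)\oplus\mathbb{C}^{2}$. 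The main obstacle is precisely this last step: merely exhibiting eigenvectors does not establish completeness, since one must rule out continuous spectrum and any eigenvalues outside $\{\lambda_{n}\}$. The cleanest remedy is to transport completeness through $U$ from the already-established orthogonal-polynomial expansion in $L_{\kappa}^{2}[-1,1]$, rather than attempting a direct spectral analysis of $\widehat{T}_{A,B}$ in the extended space.
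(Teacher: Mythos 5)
Your proof is correct, and its eigenvector half follows the paper's own route almost verbatim: place the triples $(K_{n,A,B},K_{n,A,B}(-1),K_{n,A,B}(1))$ in $\mathcal{D}(\widehat{T}_{A,B})$ via Theorem \ref{GKNEM domain is little delta}, then compute with the explicit form (\ref{Explicit Form of T_A,B}), using the eigenvalue equation (\ref{I-4}) (extended to $[-1,1]$ by polynomial continuity) and the endpoint evaluations of (\ref{I-1}). Two of your steps genuinely diverge from the paper. First, for membership in $\delta_{K}$ the paper asserts by ``straightforward calculations'' that $[K_{n,A,B},\psi_{\pm}]_{K}(\pm1)=0$; you instead invoke Theorem \ref{Boundary Conditions and Lambda} to reduce this to $\Lambda[K_{n,A,B}](\pm1)=0$ and read that off from the closed form in (\ref{II-4}), where every term carries a factor of $(1-x^{2})$ --- the same fact, but with the computation made transparent. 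Second, and more substantively, for completeness the paper appeals to the self-adjointness of $\widehat{T}_{A,B}$ together with the discreteness of $\sigma(\widehat{T}_{A,B})=\{\lambda_{n}\mid n\in\mathbb{N}_{0}\}$, a fact it does not itself establish there (it rests implicitly on the equivalence with Loveland's operator $T_{A,B}$); you instead transport completeness through the isometric isomorphism $Uf=(f,f(-1),f(1))$ from $L_{\kappa}^{2}[-1,1]$ onto $L^{2}(-1,1)\oplus\mathbb{C}^{2}$, under which the inner products (\ref{I-7}) and (\ref{sesquiliner form in H+W}) agree, invoking the Szeg\H{o}-based completeness of the Krall polynomials cited in Theorem \ref{T_A,B is self-adjoint}. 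Your route buys a self-contained argument requiring no spectral claim about $\widehat{T}_{A,B}$ beyond what is proved --- indeed you need only the isometry of spaces and the surjectivity of $U$, not even the unitary equivalence of the operators --- whereas the paper's route, once discreteness of the spectrum is granted, concludes directly from the standard fact that a self-adjoint operator with purely discrete spectrum has a complete eigenbasis. Both are valid; yours fills in the one genuinely terse step of the paper's proof.
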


\begin{remark}
\label{B=0}We began our analysis of the Krall expression in this section by
assuming the self-adjoint operator $\mathcal{B}$ in $W=\mathbb{C}^{2}$ is the
zero matrix. This is necessary for our $\widehat{T}_{A,B}$ to match up with
Loveland's self-adjoint operator $($\ref{Definition of T_A,B}$)$. In
particular, the condition that $\mathcal{B}=0$ is necessary for the Krall
polynomials to be eigenfunctions of the operator $\widehat{T}_{A,B}.$
\end{remark}

\end{document}